%
%
%
%
\documentclass[12pt,a4paper]{amsart}
\usepackage{url}
\usepackage{amsmath,amsthm,amsfonts,amssymb,latexsym}
\setlength{\textheight}{24truecm}
\textheight=23cm
 \textwidth=13.5cm
 \hoffset=-1cm
 \parindent=16pt
\newtheorem{theorem}{Theorem}[section]

\newtheorem{lemma}[theorem]{Lemma}

\newtheorem{corollary}[theorem]{Corollary}

\theoremstyle{definition}

\newtheorem{question}[theorem]{Question}

\newcommand{\U}{\mathcal U}
\newcommand{\w}{\omega}

\newcommand{\IQ}{\mathbb Q}

\newcommand{\IP}{\mathbb P}

\newcommand{\K}{\mathcal{K}}

\newcommand{\G}{\mathcal{G}}
\newcommand{\HH}{\mathcal{H}}

\newcommand{\Z}{\mathcal{Z}}

\newcommand{\F}{\mathcal{F}}

\newcommand{\V}{\mathcal{V}}

\newcommand{\uhr}{\upharpoonright}

\newcommand{\name}[1]{\dot{#1}}
\newcommand{\la}{\langle}
\newcommand{\ra}{\rangle}

\newcommand{\forces}{\Vdash}

\newcommand{\hot}{\mathfrak}
\newcommand{\sfF}{\mathsf F}
\newcommand{\sfH}{\mathsf H}

\newcommand{\nothing}[1]{}

\newcommand{\ve}{\varepsilon}

\title[Coideals and combinatorial covering properties]{Coideals as remainders of groups distinguishing between combinatorial covering properties}

\author[G. Molica Bisci, 
D.D. Repov\v{s}$^*$ 
 and 
 L. Zdomskyy]{Giovanni Molica Bisci, 
Du\v{s}an D. Repov\v{s}$^*$ 
 and 
 Lyubomyr
Zdomskyy}

\address{Dipartimento di Scienze Pure e Applicate (DiSPeA), Universit\`{a} degli
Studi di Urbino Carlo Bo, Piazza della Repubblica 13, 61029 Urbino (Pesaro e Urbino), Italy}
 \email{giovanni.molicabisci@uniurb.it}
\urladdr{http://www.giovannimolicabisci.it/}

\address{Faculty of Education, and Faculty of Mathematics and Physics, University
of Ljubljana
\& Institute of Mathematics, Physics and Mechanics,
1000 Ljubljana, Slovenia} \email{dusan.repovs@guest.arnes.si}
\urladdr{http://repovs.splet.arnes.si/}

\address{Institute of Mathematics (Kurt G\"odel Research Center),
University of Vienna, Kolingasse 14-16, A-1090 Vienna, Austria.}
\email{lzdomsky@gmail.com}
\urladdr{http://www.logic.univie.ac.at/\~{}lzdomsky/}

\thanks{The second author was supported by the
Slovenian Research Agency grants P1-0292, N1-0278, N1-0114, N1-0083, J1-4031, and J1-4001. The
third author would like to thank the Austrian Science Fund FWF
(grant I 3709) for generous support for this research. We thank the referee for comments and suggestions.\\
{\it Corresponding author:} Du\v{s}an D. Repov\v{s}, University
of Ljubljana \& Institute of Mathematics, Physics and Mechanics. Email: {\it dusan.repovs@guest.arnes.si}}

\subjclass[2020]{Primary:   03E75, 54D40, 54D20.
 Secondary:  03E35,   54D80.}
\keywords{Remainder, topological group, Menger space,  Scheepers space, filter,
forcing.}

\begin{document}
\begin{abstract}
In this paper we construct consistent examples of subgroups of
$2^\w$ with Menger remainders which fail to have other stronger
combinatorial covering properties. This answers several open
questions asked by Bella,  Tokgoz and Zdomskyy (Arch. Math. Logic
\textbf{55} (2016), 767--784).
\end{abstract}
\maketitle

\section{Introduction}
All topological spaces  will be assumed to be Tychonoff.  For a
space $X$ and its compactification $bX$ the complement $bX\setminus
X$ is called a \emph{remainder} of $X$.

The study of the interplay between covering properties of Tychonoff
spaces and those of their remainders takes its origin in the seminal
work  of Henriksen and Isbell \cite{HenIsb57}.
 Being highly homogeneous objects,
topological groups restrict the properties of their remainders in a
special, strong way, unlike general topological spaces. The study of
this phenomenon in our context was started by
 Arhangel'skii  \cite{Arh08, Arh09} and further pursued in his
 joint works with  Choban, van Mill, and others, see \cite{ArhCho21, ArhMil13, ArhMil14,ArhMil15} and
 the references therein.
 
This motivated the study in \cite{BelTokZdo16} of topological groups
whose remainders have combinatorial covering properties
 which lie between the $\sigma$-compactness and
Lindel\"of property. Recall from \cite{COC1} that a space $X$ is
said to be \emph{Menger} (or has the \emph{Menger property}) if  for
any sequence $\la \U_n: n\in\w\ra$ of open covers of $X$ one can
pick finite sets $\mathcal V_n\subset \mathcal U_n$ in such a way
that $\{\bigcup\mathcal V_n:n\in\omega\}$ is a cover of $X$. A
family $\{W_n:n\in\w\}$ of subsets of $X$ is called an
\emph{$\w$-cover} of $X$, if for every $F\in [X]^{<\w}$, the  set
$\{n\in\w:F\subset W_n\}$ is infinite. The property of
\emph{Scheepers} is defined in the same way as the Menger property,
the only difference being that we additionally demand that
$\{\bigcup\mathcal V_n:n\in\omega\}$ is a $\w$-cover of $X$. It is
immediate that
$$\sigma\mbox{-compact} \Rightarrow
\mbox{Scheepers} \Rightarrow \mbox{Menger} \Rightarrow
\mbox{Lindel\"of}.$$ Through a sequence of reductions it was proved
in \cite{BelTokZdo16} that there exists a Scheepers ultrafilter  if
and only if there exists a topological group $G$ such that $\beta
G\setminus G$ is Scheepers and not $\sigma$-compact if and only if
there exists a topological group  $G$ such that all finite powers of
$\beta G\setminus G$ are Menger   and are not $\sigma$-compact.
Here,  $\mathcal P(\w)$ is as usually identified with the Cantor
space $2^\w$ via characteristic functions, and subsets of $\mathcal
P(\w)$ are considered with the subspace topology. Thus the existence
of a topological group $G$ such that $\beta G\setminus G$ is
Scheepers (resp. has all finite powers Menger) and not
$\sigma$-compact is independent from ZFC: Such a group exists under
$\mathfrak d=\mathfrak c$, and its existence yields $P$-points, see
\cite{BelTokZdo16} and the references therein. Furthermore, it was
proved in \cite{BelTokZdo16} that the existence of a topological
group $G$ such that $(\beta G\setminus G)^2$ is
 Menger but not $\sigma$-compact  is independent from ZFC as well.

Since the same approach does not allow to solve the question whether
consistently every Menger remainder of a topological group is
$\sigma$-compact\footnote{This question remains open.}, it was asked in \cite{BelTokZdo16} whether the Scheepers and
Menger properties can be distinguished by remainders of topological
groups at all, and whether a Menger remainder of a topological group
can have a non-Menger square. In case of a negative answer outright
in ZFC this would give the consistency of all Menger remainders of
topological groups being $\sigma$-compact, simply by applying the
aforementioned results. However, we provide here two alternative
proofs that both of these questions have consistently
 the affirmative answer, by constructing counterexamples using different approaches,
  see Theorems~\ref{0main}, \ref{0main_f}, and
 Corollary~\ref{cor01}. Our topological groups are actually non-meager $P$-filters
 on $\w$, hence metrizable, $0$-dimensional,  totally bounded, and hereditarily Baire by \cite{Mar98}.
 Since the Menger property is preserved by finite products of
 metrizable spaces in the Miller model \cite{Zdo18} and coincides
 with the Scheepers property under $\hot u<\hot g$ by \cite{Zdo05}
 (this inequality holds
in the Miller model by \cite[Theorem~2]{BlaLaf89} combined with the
results of \cite{BlaShe89}),
 filters like in Theorems~\ref{0main} and \ref{0main_f}
 cannot be obtained in ZFC. Theorem~\ref{0main_density_z} is a
 variation of Theorem~\ref{0main_f} motivated by the question
 whether the density one filter can be diagonalized by a (proper)
 poset adding no dominating reals.

As discussed above, Theorem~\ref{0main} (stating that there exists a
filter $\F$ such that among other properties, both $\F$ and $\F^+$
are Menger, and hence $\F$ cannot be meager by
\cite[Prop.~2]{Zdo05}), cannot be proved in ZFC. In
section~\ref{imp_res} we investigate how far the assumptions on $\F$
can be weakened so that it is still impossible to get such a filter
in ZFC. In this context let us recall  that there are Menger
non-meager filters in ZFC, see \cite{RepZdoZha14}. On the other
hand, the existence of a filter $\F$ in ZFC such that $\F^+$ is
Menger is  unknown and constructing such a filter without additional
set-theoretic assumptions (if it is possible at all) would be
extremely difficult: If $\F^+$ is Menger then $\F$ is non-meager and
$P$, see Corollary~\ref{obs_obv}, and it is a famous
 open problem to construct a non-meager $P$-filter in ZFC. In Theorem~\ref{0_anti_main}
we show that consistently there are no Menger filters $\F$ such that
$\F^+$ is Menger as well.

Let us note that  the properties of Menger, Scheepers,  having
Menger square, etc., are preserved by perfect maps in both
directions.  This implies that if one of the remainders of a space
$X$ has one of these covering properties, then all other also have
it, see the beginning of \cite[\S~3]{BelTokZdo16} for more detailed
explanations.

All undefined topological notions can be found in \cite{Eng}. For
the definitions and basic properties of cardinal characteristics
used in this paper we refer the reader to \cite{Bla10}.

\section{The Main Result}

We need to recall some standard  as well as introduce some ad-hoc
notation and terminology. A family $\HH\subset[\w]^\w$ is called a
\emph{semifilter} if for every $H\in\HH,$ $n\in\w$, and $X\supset
H\setminus n$ we have $X\in\HH$. For a set $\F\subset \mathcal
P(\w)$ and $n\in\w$ we denote by $\la\F\ra_n$ the set
$\{\bigcap\F':\F'\in [\F]^{\leq n}\}$, and $\la\F\ra$ stands for
$$\{X\subset\w:\exists n\exists Y\in\la\F\ra_n (Y\setminus n\subset X)\}.$$
$\F$ is said to be \emph{centered} if $\la\F\ra_n\subset [\w]^\w$
for all $n\in\w$, i.e., if the intersection of any finite subfamily
of $\F$ is infinite. In this case, $\la\F\ra$ is the smallest free
filter on $\w$ containing $\F$. Let us note that  if $\F$ is compact
then so is $\la\F\ra_n$ for any $n\in\w$, being a continuous image
of $\F^n$.  Similarly, by  $\la\F\ra_s$ we shall denote the smallest
semifilter containing $\F$, i.e.,
$$\la\F\ra_s=\{X\subset\w\: :\: \exists F\in\F\exists n\in\w(F\setminus n\subset X)\}.$$
For a family $\sfF$ of subsets of $\mathcal P(\w)$ (i.e.,
$\sfF\subset\mathcal P(\mathcal P(\w))$) we define the
\emph{$\la\ra_*$-saturation} of $\sfF$ as the smallest subfamily
$\sfF_1\supset\sfF$ of $\mathcal P(\mathcal P(\w))$ such that
$\la\bigcup\sfF'\ra_n \in\sfF_1$ for any $\sfF'\in [\sfF_1]^{<\w}$
and $n\in\w$. It is  easy to write the $\la\ra_*$-saturation of a
family $\sfF$ in a precise way,
 thus proving that the $\la\ra_*$-saturation of an infinite $\sfF$ has the same cardinality as $\sfF$,
 and it consists of compact subsets of $\la\bigcup\sfF\ra$ if each $\F\in\sfF$ is compact.
We say that $\sfF$ is \emph{$\la\ra_*$-saturated} if it coincides
with its $\la\ra_*$-saturation.

Given families $\F,\HH$ of subsets of $\w$, we denote by
$\F\wedge\HH$ the family $\{F\cap H:F\in\F, H\in\HH\}$. Again, if
$\F,\HH$ are compact, then so is $\F\wedge\HH$. For
$\sfF,\sfH\subset\mathcal P(\w)$ we define the
\emph{$(\sfF,\wedge)$-saturation} of $\sfH$ as the smallest
subfamily $\sfH_1\supset\sfH$ of $\mathcal P(\mathcal P(\w))$ such
that $\F\wedge\HH\in\sfH_1$ for any $\F\in\sfF$ and $\HH\in\sfH_1$.
Again, it is straightforward to write the $(\sfF,\wedge)$-saturation
of $\sfH$ in the precise way, thus proving that it is a subfamily of
$\mathcal P(\la\bigcup\sfF\ra\wedge\bigcup\sfH)$ of size at most
$\max\{\w,|\sfF|,|\sfH|\}$, and it consists of compact sets if so do
$\sfF$ and $\sfH$. We call $\sfH$ to be
\emph{$(\sfF,\wedge)$-saturated} if it coincides with its
$(\sfF,\wedge)$-saturation.

The next easy auxiliary fact is very similar to \cite[Prop.~7]{GuzHruMar17}
and can be  derived from the latter one in a rather straightforward way.
However, for reader's convenience we give a direct proof. 

\begin{lemma}\label{bidi}
Let $\sfF$ be a family of compact subsets of $[\w]^\w$ of size
$|\sfF|<\min\{\hot d,\hot r\}$ and $\phi:\w\to\w$  a finite-to-one 
surjection. Then there exists  $Z\subset\w$ such that
$$|\phi^{-1}[Z]\cap F|=|(\w\setminus \phi^{-1}[Z])\cap F|=\w$$
for all $F\in\bigcup\sfF$.
\end{lemma}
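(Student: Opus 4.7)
My plan is to reduce via pushforward along $\phi$ to the identity case, and then to construct $Z\subset\w$ as a union of intervals whose indices are selected by a reaping set; $\mathfrak{d}$ will control interval lengths and $\mathfrak{r}$ the two-sided splitting. First I would observe that, since $\phi$ is finite-to-one, the pushforward $\phi_*:2^\w\to 2^\w$, $A\mapsto\phi[A]$, is continuous, preserves $[\w]^\w$, and sends compact sets to compact sets; hence $\sfF':=\{\phi_*(K):K\in\sfF\}$ is a family of $\le|\sfF|$ compact subsets of $[\w]^\w$. For any $Y\subset\w$ and any $F\in K\in\sfF$, $|\phi^{-1}[Y]\cap F|=\w$ is equivalent to $|Y\cap\phi[F]|=\w$ (and analogously for complements), because $\phi$ is finite-to-one. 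So it suffices to produce a single $Z\subset\w$ splitting every member of $\bigcup\sfF'$.

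For each $K\in\sfF'$ I would use compactness to extract the function $h_K(n):=\max\{\min(A\sm n):A\in K\}$, which is finite because $A\mapsto\min(A\sm n)$ is continuous from $[\w]^\w$ to $\w$. The key property is that whenever $(n_k)$ is strictly increasing and $n_{k+1}>h_K(n_k)$, every $A\in K$ meets $[n_k,n_{k+1})$. The hypothesis $|\sfF'|<\mathfrak{d}$ says that $\{h_K:K\in\sfF'\}$ is not $\le^*$-dominating, so one can choose $g\in\w^\w$ such that $M_K:=\{n:g(n)>h_K(n)\}$ is infinite for every $K\in\sfF'$. The task at this stage is to build a strictly increasing sequence $(n_k)$ with $n_{k+1}>g(n_k)$ and with $S_K:=\{k:n_k\in M_K\}$ infinite for every $K\in\sfF'$.

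Granted such $(n_k)$, I would then apply $|\sfF'|<\mathfrak{r}$ to the family $\{S_K:K\in\sfF'\}$ of infinite subsets of $\w$ to obtain $X\subset\w$ with $X\cap S_K$ and $S_K\sm X$ both infinite for every $K$. Then $Z:=\bigcup_{k\in X}[n_k,n_{k+1})$ is the required set. Indeed, for any $A\in K\in\sfF'$ and any $k\in X\cap S_K$, we have $n_{k+1}>g(n_k)>h_K(n_k)$ (using $n_k\in M_K$), so by the key property $A\cap[n_k,n_{k+1})\ne\emptyset$ and $[n_k,n_{k+1})\subset Z$; as $n_k\to\infty$ along $X\cap S_K$, $|Z\cap A|=\w$. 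Symmetrically, $|A\sm Z|=\w$ via indices in $S_K\sm X$.

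The principal obstacle is the construction of $(n_k)$ with infinite $S_K$ simultaneously for all $K\in\sfF'$: a naive iteration $n_{k+1}=g(n_k)+1$ produces a sparse orbit that may miss a given $M_K$ entirely. I anticipate resolving this by intertwining the two cardinal hypotheses, first applying $\mathfrak{r}$ to the family $\{M_K\}$ itself to locate $X_0\subset\w$ meeting each $M_K$ (and each $\w\sm M_K$) in an infinite set, and then building $(n_k)$ greedily through $X_0$ subject to the growth $n_{k+1}>g(n_k)$; the delicate point is to arrange the greedy choices so that the thinning does not destroy the meeting with the $M_K$'s, paralleling the combinatorial argument of \cite[Prop.~7]{GuzHruMar17}.
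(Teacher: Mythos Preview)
Your pushforward reduction and your final $\mathfrak{r}$-step are both correct; the pushforward is in fact a clean alternative to the paper's reduction (which instead composes with a bijection to make $\phi$ monotone and then arranges the interval endpoints to lie in $\{k_n:n\in\w\}$). The genuine gap is exactly where you flag it: producing $(n_k)$ with $n_{k+1}>g(n_k)$ and $\{k:n_k\in M_K\}$ infinite for all $K$. Your suggested fix --- reap $\{M_K\}$ to get $X_0$, then thread $(n_k)$ greedily through $X_0$ --- does not work: the growth condition makes $(n_k)$ a sparse orbit, and a sparse subsequence of $X_0$ has no reason to meet each $X_0\cap M_K$ infinitely often. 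Reaping buys nothing here, because the $M_K$'s are arbitrary infinite sets carrying no structure tied to the orbit of $g$; the tension between ``grow at rate $g$'' and ``land in $M_K$'' is not a splitting phenomenon.

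The paper avoids the obstacle by applying $\mathfrak{d}$ at the level of interval partitions rather than pointwise. In your notation: instead of seeking $g$ with $g>h_K$ infinitely often and then an orbit through those points, first pass to the orbit of your $h_K$, setting $h_K^\circ(0)=0$ and $h_K^\circ(j+1)=h_K(h_K^\circ(j))+1$, so that every $A\in K$ meets every $[h_K^\circ(j),h_K^\circ(j+1))$. Since the family $\{k\mapsto h_K^\circ(2k):K\in\sfF'\}$ has size $<\mathfrak{d}$ and is therefore not dominating, there is a strictly increasing $h$ with $h(0)=0$ and $h(k)>h_K^\circ(2k)$ for infinitely many $k$, for every $K$. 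A pigeonhole count then shows that $I_K:=\{n:[h(n),h(n+1))\supset[h_K^\circ(j),h_K^\circ(j+1))\text{ for some }j\}$ is infinite for each $K$ (if $I_K\subset N$ then $[0,h(k))$ meets $\mathrm{ran}(h_K^\circ)$ in at most $C+(k-N)$ points for $k>N$, contradicting $h(k)>h_K^\circ(2k)$ infinitely often). With $n_k:=h(k)$ the growth is built in, the sets $I_K$ play the role you wanted for $S_K$, and $\mathfrak{r}$ reaps them exactly as you intended.
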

\begin{proof}
Let us first assume that $\phi$ is monotone and consider
the strictly increasing number sequence
$\la k_n:n\in\w\ra$   with $k_0=0$ such that
$\phi^{-1}(n)=[k_n,k_{n+1})$ for all $n\in\w$. 

For every $\F\in\sfF$ let $h_\F:\w\to\w$ be a strictly increasing function such that
$[h_\F(n), h_{\F}(n+1))\cap F\neq\emptyset$ for all $F\in\F$ and $n\in\w$. Since $|\sfF|<\hot d$, there
exists a strictly increasing $h\in\w^\w$ such that
$$ I_\F:=\{ n\in\w\: : \: (|[h(n),h(n+1))\cap h_\F[\w]|\geq 2)\}$$
is infinite  for all $\F\in\sfF$. Without loss of generality we may assume that
$h[\w]\subset \{k_n:n\in\w\}$, and hence for every (infinite) $I\subset\w$ there exists (an infinite) $Z\subset\w$
such that $\bigcup_{n\in I}[h(n),h(n+1))=\phi^{-1}[Z]$.

 Since $|\F|<\hot r$ there exists $I\subset\w$ such that
 $|I\cap I_\F|=|I_\F\setminus I|=\w$ for all $\F\in\sfF$.
 We claim that
 $$ \big|\bigcup_{n\in I}[h(n),h(n+1))\cap F\big|=\big|\bigcup_{n\in \w\setminus I}[h(n),h(n+1))\cap F\big|=\w $$
 for all $F\in\bigcup\sfF$. Indeed, let us find $\F\in\sfF$ containing $F$ and  pick $n\in I\cap I_\F$.
 Then there exists $m\in\w$ such that
 $[h(n),h(n+1))\supset [h_\F(m),h_\F(m+1))$,
 and hence
 $$\emptyset\neq F\cap [h_\F(m),h_\F(m+1))\subset F\cap [h(n),h(n+1))\subset F\cap\bigcup_{n\in I}[h(n),h(n+1))\cap F.$$
The case of $\w\setminus I$ is analogous.
\smallskip

Now suppose that $\phi$ is arbitrary
 finite-to-one surjection  from $\w$ to $\w$.
Fix a bijection $\theta:\w\to\w$ such that
$\phi\circ\theta$ is a monotone surjection. It follows from the above that there exists
$Z\subset\w$ such that
$$\big|(\phi\circ\theta)^{-1}[Z]\cap \theta^{-1}[F]\big|=\big|(\w\setminus (\phi\circ\theta)^{-1}[Z])\cap \theta^{-1}[F]\big|=\w$$
for all $F\in\bigcup\sfF$, i.e.,
$$\big|\theta^{-1}[\phi^{-1}[Z]]\cap \theta^{-1}[F]\big|=\big|(\w\setminus \theta^{-1}[\phi^{-1}[Z]])\cap \theta^{-1}[F]\big|=\w$$
and thus also
$$|\phi^{-1}[Z]\cap F|=|(\w\setminus \phi^{-1}[Z])\cap F|=\w$$
for all $F\in\bigcup\sfF$ because $\theta$ is a bijection.
\end{proof}

A semifilter $\F$ is called a \emph{$P$-semifilter} if for every
sequence $\la F_n:n\in\w\ra$ of elements of $\F$ there exists a
sequence $\la K_n:n\in\w\ra$ such that $K_n\in [F_n]^{<\w}$ for all
$n\in\w$ and $\bigcup_{n\in\w}K_n\in\F$. If
$\F^+=\{X\subset\w:\forall F\in\F(X\cap F\neq\emptyset)\}$ is a
$P$-semifilter, then we also say that $\F$ is a
\emph{$P^+$-semifilter}. Each semifilter $\F$ on $\w$ gives rise to
the semifilter $\F^{(<\w)}$ on $[\w]^{<\w}\setminus\{\emptyset\}$
generated by the family $\{[F]^{<\w}\setminus\{\emptyset\}\: :\:
F\in\F \}$. If $\F$ is a filter we shall call $\F^+$ the \emph{coideal} of $\F$.

Next, we put together several  known facts about Menger semifilters
established in \cite{ChoRepZdo15} and \cite{GuzHruMar17} and get a
potentially useful characterization.

\begin{theorem}\label{basics}
Let $\F$ be a semifilter on $\w$. Then the following statements are
equivalent:
\begin{itemize}
\item[$(1)$]  $\F$ is Menger;
\item[$(2)$] For every sequence $\la\K_n:n\in\w\ra$ of compact subsets of $\F^+$
there exists an increasing sequence $\la m_n:n\in\w\ra\in\w^\w$ with
the property $\bigcup_{n\in\w}(K_n\cap m_n)\in\F^+$ for any $\la
K_n:n\in\w\ra\in\prod_{n\in\w}\K_n$;
\item[$(3)$] For every sequence $\la\K_n:n\in\w\ra$ of compact subsets of $\F^+$
there exists an increasing sequence $\la m_n:n\in\w\ra\in\w^\w$ with
the property $\bigcup_{n\in\w}\big(K_n\cap
[m_{n-1},m_n)\big)\in\F^+$ for\footnote{We set here $m_{-1}=0$.} any
$\la K_n:n\in\w\ra\in\prod_{n\in\w}\K_n$; and
\item[$(4)$]  $\F^{(<\w)}$ is a $P^+$-semifilter.
\end{itemize}
\end{theorem}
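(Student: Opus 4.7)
The plan is to establish the cycle $(1)\Rightarrow(3)\Rightarrow(2)\Rightarrow(1)$ together with the equivalence $(1)\Leftrightarrow(4)$. The implication $(3)\Rightarrow(2)$ is immediate from $[m_{n-1},m_n)\subseteq m_n$ and the upward closure of $\F^+$. The equivalence $(1)\Leftrightarrow(4)$ is essentially the content of \cite{GuzHruMar17}: Mengerness of $\F$ is equivalent to $\F^{(<\w)}$ being a $P^+$-semifilter, and I would invoke it directly.

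For $(1)\Rightarrow(3)$, given compact $\K_n\subseteq\F^+$, I would introduce, for each pair $a\leq b$ of naturals, the set
$$W_n(a,b):=\{F\subseteq\w:\forall K\in\K_n,\ F\cap K\cap[a,b)\neq\emptyset\}.$$
Compactness of $\K_n$ forces the trace $\{K\cap[a,b):K\in\K_n\}$ to be a finite subset of $2^{[a,b)}$, so $W_n(a,b)$ is a finite Boolean combination of basic clopen cylinders, hence clopen. For each fixed $F\in\F$ and $a\in\w$ the function $\K_n\ni K\mapsto\min(K\cap F\setminus a)$ is upper semicontinuous, its minimum a natural number because the semifilter axioms force $K\cap F$ to be infinite; hence it is bounded by compactness of $\K_n$, and consequently $\{W_n(a,m):m>a\}$ is an open cover of $\F$ for every fixed $a$. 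Invoking Hurewicz's characterization of the Menger property (Player~I has no winning strategy in the Menger game), I would play, as Player~I at round $n$ after observing II's previous moves $m_0<\cdots<m_{n-1}$, the cover $\{W_n(m_{n-1},m):m>m_{n-1}\}$; since this strategy is not winning, there is a II-play producing a sequence $(m_n)$ with $\bigcup_n W_n(m_{n-1},m_n)\supseteq\F$, which is precisely~$(3)$.

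For $(2)\Rightarrow(1)$, I would argue contrapositively: failure of Mengerness, witnessed by open covers $\cU_n$ (WLOG of basic clopen cylinders), is encoded into compact sets $\K_n\subseteq\F^+$ whose level-$m$ traces record the finite patterns of points of $\F$ escaping every finite truncated subfamily of $\cU_n$; then $(2)$ applied to these $\K_n$ would produce $(m_n)$ yielding finite subcovers from each $\cU_n$ whose union covers $\F$, contradicting the failure of Menger. The main obstacle is the interval upgrade in $(1)\Rightarrow(3)$: a naive application of Menger to the covers $\{W_n(0,m):m\in\w\}$ only delivers $(2)$, and bridging the gap to $(3)$ without the game characterization forces an intricate diagonalization that iterates Menger against compact families shifted by $\{K\setminus m_{n-1}:K\in\K_n\}$; the game formulation condenses this into a single play against an adaptive Player~I strategy.
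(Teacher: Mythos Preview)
Your overall scheme --- a cycle through $(1),(3),(2)$ closed back to $(1)$, together with the black-box equivalence $(1)\Leftrightarrow(4)$ --- is the same architecture the paper uses, and your treatment of $(3)\Rightarrow(2)$ is identical. Your argument for $(1)\Rightarrow(3)$ via the Menger game is correct and is in fact more self-contained than the paper, which simply cites \cite[Prop.~3.4]{ChoRepZdo15} for this step. (A minor point: the equivalence $(1)\Leftrightarrow(4)$ is \cite[Claim~2.4]{ChoRepZdo15}, not \cite{GuzHruMar17}.)

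The genuine gap is the step $(2)\Rightarrow(1)$. The phrase ``encoded into compact sets $\K_n\subseteq\F^+$ whose level-$m$ traces record the finite patterns of points of $\F$ escaping every finite truncated subfamily of $\cU_n$'' is not a definition of $\K_n$, and it is not clear how to manufacture compact subsets of $\F^+$ out of an arbitrary bad sequence of clopen covers of $\F$ so that an application of $(2)$ returns a Menger-type selection. The paper sidesteps this by proving $(2)\Rightarrow(4)$ instead, with a completely explicit construction: given $E_n\in(\F^{(<\w)})^+$, put
\[
\K_n=\{X\subset\w:\forall e\in E_n\ (X\cap e\neq\emptyset)\},
\]
check that $\K_n$ is closed in $2^\w$ and lies in $\F^+$, apply $(2)$ to obtain $\la m_n:n\in\w\ra$, and verify $\bigcup_{n}\big(E_n\cap\mathcal P(m_n)\big)\in(\F^{(<\w)})^+$ by showing that a putative $F\in\F$ witnessing the contrary yields $K_n:=\bigcup\{e\setminus F:e\in E_n,\ e\subset m_n\}\cup(\w\setminus m_n)\in\K_n$ with $F\cap\bigcup_n(K_n\cap m_n)=\emptyset$, contradicting $(2)$. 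Since you already invoke $(1)\Leftrightarrow(4)$, this short concrete argument closes your cycle via $(2)\Rightarrow(4)\Rightarrow(1)$ and should replace the contrapositive sketch.
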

\begin{proof}
The equivalence $(1)\Leftrightarrow (4)$ was established in
\cite[Claim~2.4]{ChoRepZdo15}, and its proof works verbatim also for
semifilters. The implication $(1)\to (3)$ was obtained in
\cite[Prop.~3.4]{ChoRepZdo15}, its proof again works for semifilters
without any changes, while $(3)\to (2)$ is straightforward. Thus it
remains to prove $(2)\to (4)$. Let $\la E_n:n\in\w\ra$ be a sequence
of elements of $(\F^{(<\w)})^+$. For each $n$ set
$\K_n=\{X\subset\w:\forall e\in E_n (X\cap e\neq\emptyset)\}$ and
note that $\K_n\subset\F^+$: If $K\cap F=\emptyset$ for some $K\in
\K_n$ and $F\in\F$, then there is no $e\in E_n\cap [F]^{<\w}$, which
contradicts our choice of $E_n$. Let $\la m_n:n\in\w\ra$ be as in
$(2)$. We claim that $\bigcup_{n\in\w} E_n\cap\mathcal
P(m_n)\in(\F^{(<\w)})^+$, which will imply $(4)$. Indeed, otherwise
there exists $F\in\F$ such that $e\setminus F\neq\emptyset$ for each
$e\in E_n\cap\mathcal P(m_n)$ and $n\in\w$. Thus
$$K_n:=\bigcup\{e\setminus F:e\in E_n, e\subset m_n\}\cup(\w\setminus m_n)\in\K_n$$
for all $n$. However,
\begin{eqnarray*}
F\cap \bigcup_{n\in\w}(K_n\cap m_n)=F\cap
\bigcup_{n\in\w}\bigcup\{e\setminus F:e\in E_n, e\subset m_n\}=\\
=\bigcup_{n\in\w} (F\cap \bigcup\{e\setminus F:e\in E_n, e\subset
m_n\})=\emptyset,
\end{eqnarray*}
and hence $\bigcup\{K_n\cap m_n:n\in\w\}\not\in\F^+$, which
contradicts $(2)$.
\end{proof}

\begin{corollary} \label{obs_obv}
Each Menger semifilter is $P^+.$
\end{corollary}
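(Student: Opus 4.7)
My plan is to derive the corollary directly from the equivalence $(1)\Leftrightarrow (3)$ in Theorem~\ref{basics}, using the simplest possible choice of compact subsets, namely singletons. Recall that showing $\F$ is $P^+$ means showing $\F^+$ is a $P$-semifilter: given any sequence $\la X_n:n\in\w\ra$ of elements of $\F^+$, I must produce $K_n\in [X_n]^{<\w}$ whose union lies in $\F^+$.

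First, I observe that for each $n\in\w$ the singleton $\K_n:=\{X_n\}$ is a (trivially) compact subset of $\F^+$. Since $\F$ is Menger, Theorem~\ref{basics}$(3)$ applies to the sequence $\la\K_n:n\in\w\ra$ and yields an increasing sequence $\la m_n:n\in\w\ra\in\w^\w$ (with $m_{-1}=0$) such that for the only possible choice of $K_n\in\K_n$, namely $K_n=X_n$, we have
\[
\bigcup_{n\in\w}\big(X_n\cap[m_{n-1},m_n)\big)\in\F^+.
\]

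Now I simply set $K_n:=X_n\cap[m_{n-1},m_n)$. Each $K_n$ is finite, being contained in the interval $[m_{n-1},m_n)$, and it is a subset of $X_n$, so $K_n\in [X_n]^{<\w}$. The displayed formula above then asserts precisely that $\bigcup_{n\in\w}K_n\in\F^+$, which is the $P$-property for the semifilter $\F^+$. Since the sequence $\la X_n:n\in\w\ra\subset\F^+$ was arbitrary, $\F^+$ is a $P$-semifilter, i.e., $\F$ is $P^+$, as required.

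There is no genuine obstacle here: all the work is packed into the implication $(1)\Rightarrow (3)$ of Theorem~\ref{basics}, and the corollary is essentially a reading of that implication in the degenerate case where each compact family $\K_n$ is a single point. The only thing to verify is that the witness produced by Theorem~\ref{basics}$(3)$ automatically yields finite subsets of the prescribed $X_n$, which is immediate from the fact that the intervals $[m_{n-1},m_n)$ are finite.
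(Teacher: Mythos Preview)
Your proof is correct and is exactly the intended derivation: the paper states the corollary immediately after Theorem~\ref{basics} without a separate proof, and your argument---applying item $(3)$ (or equivalently $(2)$) of that theorem to the singleton families $\K_n=\{X_n\}$---is precisely the one-line reading the authors have in mind.
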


Theorem~\ref{basics} will be crucial for the proof of the following
fact, which is the main result of the paper.

\begin{theorem}\label{0main}
($\hot r=\hot d=\hot c$). There exists a  filter $\G$ on $\w$ such that
\begin{itemize}
\item[$(1)$] $\G$ and $\G^+$ are Menger;
\item[$(2)$] For every finite-to-one surjection $\phi:\w\to\w$ there exists $X\subset\w$ such that
$\phi^{-1}[X],\phi^{-1}[\w\setminus X]\in\G^+$.
\end{itemize}
\end{theorem}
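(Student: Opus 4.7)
The plan is a transfinite recursion of length $\hot c$ exploiting $\hot r=\hot d=\hot c$: the $\hot d$-part will drive Menger-type diagonalizations, while the $\min\{\hot d,\hot r\}$-part will feed directly into Lemma~\ref{bidi}. I will build simultaneously two increasing chains $\la\sfF_\alpha:\alpha<\hot c\ra$ and $\la\sfH_\alpha:\alpha<\hot c\ra$ of families of compact subsets of $[\w]^\w$, each of cardinality $<\hot c$, with $\sfF_\alpha$ being $\la\ra_*$-saturated and $\sfH_\alpha$ being $(\sfF_\alpha,\wedge)$-saturated, subject to the \emph{compatibility invariant}: $F\cap H$ is infinite whenever $F\in\bigcup\sfF_\alpha$ and $H\in\bigcup\sfH_\alpha$. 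Setting $\G=\la\bigcup_{\alpha<\hot c}\bigcup\sfF_\alpha\ra$ will then produce a filter with $\bigcup_\alpha\bigcup\sfH_\alpha\sbst\G^+$.

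Using $\hot c^\w=\hot c$, I first fix enumerations $\la\la\K_n^\alpha:n\in\w\ra:\alpha<\hot c\ra$ and $\la\la\HH_n^\alpha:n\in\w\ra:\alpha<\hot c\ra$ of all $\w$-sequences of compact subsets of $[\w]^\w$, and $\la\phi_\alpha:\alpha<\hot c\ra$ of all finite-to-one surjections $\w\to\w$. Standard bookkeeping splits the successor stages into three cofinal kinds. At a type (A) stage, given $\la\K_n^\alpha\ra$ with $\K_n^\alpha\sbst\la\bigcup\sfF_\alpha\ra^+$ for all $n$, I use upper semicontinuity of $(K,F)\mapsto\min(K\cap F\cap[k,\w))$ on the compact $\F\times\bigcup_{m\leq k}\K_m^\alpha$ to attach to each $\F\in\sfF_\alpha$ a function $g_\F\in\w^\w$ such that $K\cap F\cap[k,g_\F(k))\neq\emptyset$ for every $F\in\F$ and $K\in\bigcup_{m\leq k}\K_m^\alpha$, pick an increasing $g\in\w^\w$ dominating each $g_\F$ (possible since $|\sfF_\alpha|<\hot d$), and set $m_n=h(n)$ via $h(0)=0$, $h(n+1)=g(h(n))$; the resulting compact $\K^*=\{\bigcup_n(K_n\cap[m_{n-1},m_n)):\la K_n\ra\in\prod_n\K_n^\alpha\}$ is then compatible with $\sfF_\alpha$, and is added to $\sfH_{\alpha+1}$ after $(\sfF_\alpha,\wedge)$-saturation. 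A type (B) stage is dual: for $\la\HH_n^\alpha\ra$ satisfying stage-$\alpha$ compatibility with both $\sfF_\alpha$ and $\sfH_\alpha$, I run the same $\hot d$-diagonalization simultaneously against the auxiliary compact sequences $\F\wedge\HH_n^\alpha$, $\HH\wedge\HH_n^\alpha$, and the $k$-fold self-meets of $\HH_n^\alpha$ (for all $k\in\w$, $\F\in\sfF_\alpha$, $\HH\in\sfH_\alpha$), obtaining $m_n$'s for which the resulting compact $\HH^*$ of diagonal sequences is centered together with $\sfF_\alpha$ and compatible with $\sfH_\alpha$; then $\HH^*$ is placed in $\sfF_{\alpha+1}$ and the $\la\ra_*$-saturation is taken. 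At a type (C) stage, I apply Lemma~\ref{bidi} to $\sfF_\alpha\cup\sfH_\alpha$ (of size $<\hot c=\min\{\hot d,\hot r\}$) to obtain $Z\sbst\w$ so that $\{\phi_\alpha^{-1}[Z]\}$ and $\{\phi_\alpha^{-1}[\w\setminus Z]\}$ are both compatible with $\sfF_\alpha$, and adjoin both singletons to $\sfH_{\alpha+1}$.

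The verification hinges on the fact that the validity conditions used above are automatic for the final Menger tests. Any compact $\K_n\sbst\G^+$ satisfies $\K_n\sbst\la\bigcup\sfF_\alpha\ra^+$ at every stage, because $\G^+\sbst\la\bigcup\sfF_\alpha\ra^+$; and any compact $\HH_n\sbst(\G^+)^+=\G$ is compatible with every $\sfF_\alpha$ (since $\G\sbst\la\bigcup\sfF_\alpha\ra^+$ by centeredness of $\bigcup\sfF$) and with every $\sfH_\alpha\sbst\G^+$ (since elements of $\G^+$ meet every element of $\G$ infinitely). Hence each valid final Menger test gets processed at some stage, producing $\K^*\sbst\G^+$ through $\sfH$ and $\HH^*\sbst\G$ through $\sfF$, which certify Theorem~\ref{basics}(3) for $\G$ and $\G^+$ respectively. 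Clause~(2) then follows from the (C)-stages, which place both $\phi_\alpha^{-1}[Z]$ and $\phi_\alpha^{-1}[\w\setminus Z]$ in $\G^+$.

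The main technical obstacle lies in the type (B) step, where the new compact family $\HH^*$ must be simultaneously compatible with the coideal-side chain $\sfH_\alpha$ \emph{and} centered together with the existing $\sfF_\alpha$ so that $\la\bigcup\sfF_{\alpha+1}\ra$ remains a filter. Compatibility with $\sfH_\alpha$ is secured by folding $\HH\wedge\HH_n^\alpha$ into the diagonalization; centeredness requires simultaneously controlling all $k$-fold self-meets of $\HH_n^\alpha$, yielding a countable enlargement of the collection of auxiliary compact sequences, which is absorbed by the $\hot d$-step. The remaining verifications---correctness of limit stages, preservation of both saturations, and cardinality control---are routine.
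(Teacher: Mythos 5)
Your overall architecture is the same as the paper's: a length-$\hot c$ recursion building two increasing chains $\la\sfF_\alpha\ra$ and $\la\sfH_\alpha\ra$ of compact families with $\bigcup\sfF_\alpha$ centered and $\bigcup\sfH_\alpha$ positive over $\la\bigcup\sfF_\alpha\ra$, Lemma~\ref{bidi} feeding clause $(2)$, and Theorem~\ref{basics} converting the diagonalizations into the Menger property of $\G$ and $\G^+$. Your one genuine deviation is organizational and legitimate: where the paper uses the dichotomies $(e)/(f)$ and $(g)/(h)$ (process a sequence if it already sits inside $\la\bigcup\sfF_\alpha\ra$, resp.\ $\la\bigcup\sfH_\alpha\ra_s$, and otherwise actively ``kill'' one of its members by putting its complement on the other side), you weaken the processing precondition to stage-$\alpha$ compatibility, which, as you correctly observe, is automatic for any sequence that ends up inside $\G^+$ (since $\G^+\sbst\la\bigcup\sfF_\alpha\ra^+$) or inside $\G$ (since $\bigcup\sfF_\alpha\sbst\G$ and $\bigcup\sfH_\alpha\sbst\G^+$). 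This removes the need for the killing clauses; just make sure your type (B) diagonalization really controls the three-way meets $F\cap H\cap K_1\cap\dots\cap K_k$ (your list ``$\F\wedge\HH_n^\alpha$, $\HH\wedge\HH_n^\alpha$, $k$-fold self-meets'' does not literally include these, though $(\sfF_\alpha,\wedge)$-saturation of $\sfH_\alpha$ lets you recover them), since these are exactly what is needed to keep $\bigcup\sfH_\alpha\sbst\la\bigcup\sfF_{\alpha+1}\ra^+$ after $\HH^*$ enters $\sfF$.

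There is, however, one step that fails as written: in the type (A) (and implicitly type (B)) diagonalization you ``pick an increasing $g\in\w^\w$ dominating each $g_\F$ (possible since $|\sfF_\alpha|<\hot d$)''. A family of size $<\hot d$ need not be bounded in $(\w^\w,\leq^*)$; boundedness requires size $<\hot b$, and the hypothesis $\hot r=\hot d=\hot c$ is consistent with $\hot b=\w_1<\hot c$ (e.g.\ in the Cohen model), so already after $\w_1$ stages the family $\{g_\F:\F\in\sfF_\alpha\}$ may be unbounded and no such $g$ exists. What $|\sfF_\alpha|<\hot d$ does give is a single increasing sequence that is $\geq g_\F$ at \emph{infinitely many} places for each $\F$, and this suffices --- but only if the diagonalization is set up so that infinitely many successful blocks are enough and the witnessing intervals are anchored independently of the sequence being built. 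This is precisely how the paper arranges it: it demands $K\cap H\cap(m^{\HH}_n\setminus n)\neq\emptyset$ (interval anchored at $n$), chooses $\la m^\alpha_n\ra$ with $|\{n:m^{\HH}_n\leq m^\alpha_n\}|=\w$, and uses initial segments $K_n\cap m^\alpha_n$ rather than disjoint blocks. Your recipe $h(0)=0$, $h(n+1)=g(h(n))$ evaluates $g$ at points defined from $g$ itself and needs genuine domination to guarantee that the blocks $[h(n),h(n+1))$ work; you should replace it by the ``infinitely often above, anchored at $n$'' version. Once that is done, the rest of your verification goes through.
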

\begin{proof}
Let us fix the following enumerations:
\begin{itemize}
\item $\{\la\K^\alpha_n:n\in\w\ra:\alpha<\hot c\}$=: the family of all sequences of compact subsets of
$[\w]^\w$;
\item $\{\phi_\alpha:\alpha<\hot c\}$=: the family of all finite-to-one surjections from $\w\to\w$.
\end{itemize}
By recursion over $\alpha<\hot c$ we shall construct a sequence
$\la\la \sfF_\alpha,\sfH_\alpha\ra:\alpha<\hot c\ra$ such that
\begin{itemize}
\item[$(a)$] $\sfF_\alpha,\sfH_\alpha$ are families of compact subsets of $[\w]^\w$
of size $<\hot c$   with $\sfF_0=\sfH_0=\{\{\w\}\}$;
\item[$(b)$] $\bigcup\sfF_\alpha$ is centered and $\sfF_\alpha$ is $\la\ra_*$-saturated;
\item[$(c)$] $\sfF_\alpha\subset\sfF_{\alpha'}\cap\sfH_{\alpha'}$ and $\sfH_\alpha\subset\sfH_{\alpha'}$
 for any $\alpha\leq\alpha'$;
\item[$(d)$] $\bigcup\sfH_\alpha\subset \la\bigcup\sfF_\alpha\ra^+$,
and $\sfH_\alpha$ is $(\sfF_\alpha,\wedge)$-saturated;
\item[$(e)$] If
$\bigcup_{n\in\w}\K^\alpha_n \subset \la\bigcup\sfF_\alpha\ra$,
 then there exists an increasing
number sequence
$\la m^\alpha_n:n\in\w \ra$ such that
$\sfF_{\alpha+1}\ni \K_\alpha$,
where $\K_\alpha=\big\{\bigcup_{n\in\w}(K_n\cap m^\alpha_n):\la K_n:n\in\w\ra\in\prod_{n\in\w}\K^\alpha_n \big\};$
\item[$(f)$] If
$\bigcup_{n\in\w}\K^\alpha_n \not\subset\la\bigcup\sfF_\alpha\ra$,
 then there exists $K_\alpha\in \bigcup_{n\in\w}\K^\alpha_n$ such that
 $\{\w\setminus K_\alpha\}\in\sfH_{\alpha+1}$;
 \item[$(g)$]
 If
$\bigcup_{n\in\w}\K^\alpha_n \subset\la\bigcup\sfH_\alpha\ra_s$,
then there exists an increasing
number sequence
$\la l^\alpha_n:n\in\w \ra$ such that
$\sfH_{\alpha+1}\ni \mathcal L_\alpha$,
where $\mathcal L_\alpha=\big\{\bigcup_{n\in\w}(K_n\cap l^\alpha_n):\la K_n:n\in\w\ra\in\prod_{n\in\w}\K^\alpha_n \big\};$
\item[$(h)$]
 If
$\bigcup_{n\in\w}\K^\alpha_n \not\subset\la\bigcup\sfH_\alpha\ra_s$,
 then there exists $L_\alpha\in \bigcup_{n\in\w}\K^\alpha_n$ such that
 $\{\w\setminus L_\alpha\}\in\sfF_{\alpha+1}$;
 \item[$(j)$] There exists  $Z_\alpha\subset\w$
 such that $\{\phi_\alpha^{-1}[Z_\alpha],\phi_\alpha^{-1}[\w\setminus Z_\alpha]\}\in\sfH_{\alpha+1}$.
\end{itemize}
First, let us assume that we have constructed a sequence $\la\la
\sfF_\alpha,\sfH_\alpha\ra:\alpha<\hot c\ra$ satisfying $(a)$-$(j)$.
We claim that
$$\G:=\big\la\bigcup\{\bigcup\sfF_\alpha:\alpha<\hot c\}\big\ra$$ is as required. It follows that $\G^+$ equals $$\G_1:=\big\la\bigcup\{\bigcup\sfH_\alpha:\alpha<\hot c\}\big\ra_s.$$
Indeed, $\G_1\subset\G^+$
follows directly from $(d)$. To prove that $\G^+\subset\G_1$
let us fix any
$X\not\in\G_1$. Let $\alpha<\hot c$ be such that  $\K^\alpha_n=\{X\}$ for every $n\in\w$
and note that $\la\K^\alpha_n:n\in\w\ra$ satisfies the premises of $(h)$. Therefore there exists
$L_\alpha\in\bigcup_{n\in\w}\K^\alpha_n=\{X\}$ (i.e., $L_\alpha=X$) such that $\{\w\setminus L_\alpha\}\in\sfF_{\alpha+1}$,
and hence $\w\setminus X\in\G$ which yields $X\not\in\G^+$.

Next, we shall establish that both $\G$ and $\G^+$ are Menger.
Let  $\la\K_n:n\in\w\ra$ be a sequence of compact subsets of $\G$ and
$\alpha$ be such that $\K^\alpha=\K^\alpha_n$ for all $n\in\w$. It follows that
$\bigcup_{n\in\w}\K^\alpha_n\subset\la\bigcup\sfF_\alpha\ra$. Indeed, otherwise
by $(f)$ there exists $K_\alpha\in\bigcup_{n\in\w}\K^\alpha_n$ such that $\{\w\setminus K_\alpha\}\in\sfH_{\alpha+1}$,
and therefore $\w\setminus K_\alpha\in\G^+$, which contradicts $K_\alpha\in\G$. Thus
$\la\K^\alpha_n:n\in\w\ra$ fulfills the premises of $(e)$, which yields
 an increasing sequence $\la m_n:n\in\w\ra\in\w^\w$
with the property $\bigcup_{n\in\w}(K_n\cap m_n)\in\G$ for any $\la
K_n:n\in\w\ra\in\prod_{n\in\w}\K^\alpha_n.$ Applying
Theorem~\ref{basics}, we conclude that $\G^+$ is Menger.

To see  that also $\G$ is Menger
let us consider a sequence  $\la\K_n:n\in\w\ra$   of compact subsets of $\G^+$ and find
$\alpha$  such that $\K^\alpha=\K^\alpha_n$ for all $n\in\w$. It follows that
$\bigcup_{n\in\w}\K^\alpha_n\subset\la\bigcup\sfH_\alpha\ra_s$. Indeed, otherwise
by $(h)$ there exists $L_\alpha\in\bigcup_{n\in\w}\K^\alpha_n$ such that $\{\w\setminus L_\alpha\}\in\sfF_{\alpha+1}$,
and therefore $\w\setminus L_\alpha\in\G$, which contradicts $L_\alpha\in\G^+$. Thus
$\la\K^\alpha_n:n\in\w\ra$ fulfills the premises of $(g)$, which yields
 an increasing sequence $\la l_n:n\in\w\ra\in\w^\w$
with the property $\bigcup_{n\in\w}(K_n\cap l_n)\in\G^+$ for any
$\la K_n:n\in\w\ra\in\prod_{n\in\w}\K^\alpha_n.$ Applying
Theorem~\ref{basics}, we conclude that $\G$ is Menger.
\smallskip

Finally, we show how to construct a sequence as above
 satisfying $(a)$-$(j)$. Limit stages are straightforward, so
 suppose that
we have already constructed $\la\la \sfF_\beta,\sfH_\beta\ra:\beta\leq\alpha\ra$ satisfying $(e)$-$(j)$
for all $\beta<\alpha$, $(a),(b)$ and $(d)$ for all $\beta\leq\alpha$, and
$(c)$ for all $\beta\leq\beta'\leq\alpha$. Two cases are possible.
\smallskip

1). $\bigcup_{n\in\w}\K^\alpha_n \subset \la\bigcup\sfF_\alpha\ra$. Then
for every $\HH\in \sfH_\alpha$   we can find an increasing sequence $\la m^{\HH}_n:n\in\w\ra\in\w^\w$
such that $$ K\cap H\cap (m^{\HH}_n\setminus n) \neq\emptyset $$
 for every $K\in\la\K^\alpha_{n}\ra_n$,
$H\in \HH$, and $n\in\w$. Such an $m^{\HH}_n$ exists because of the compactness of the involved sets
and $\HH\subset\bigcup\sfH_\alpha\subset \la\bigcup\sfF_\alpha\ra^+$.
Since $|\sfH_\alpha|<\hot d$, there exists  an increasing sequence $\la m^\alpha_n:n\in\w\ra\in\w^\w$
such that $|\{n\in\w:m^{\HH}_n\leq m^\alpha_n\}|=\w$ for all $\HH\in \sfH_\alpha$.
Set
$$\K_\alpha=\Big\{\bigcup_{n\in\w}(K_n\cap m^\alpha_n):\la K_n:n\in\w\ra\in\prod_{n\in\w}\K^\alpha_n \Big\},$$
and $\sfF_{\alpha+1}$ to be the $\la\ra_*$-saturation of
$\sfF_\alpha \cup \{\K_\alpha\}.$
Since $|\sfF_{\alpha+1}|<\hot c$,
Lemma~\ref{bidi} yields  $Z_\alpha\subset\w$
 such that $\{\phi_\alpha^{-1}[Z_\alpha],\phi_\alpha^{-1}[\w\setminus Z_\alpha]\}\in \la\bigcup\sfF_{\alpha+1}\ra^+$.
 We define $\sfH_{\alpha+1}$ to be
 the $(\sfF_{\alpha+1},\wedge)$-saturation of
 $$\sfH_{\alpha}\cup\big\{ \{\phi_\alpha^{-1}[Z_\alpha],\w\setminus\phi_\alpha^{-1}[Z_\alpha]\}\big\}.$$
We are left with the task of checking that conditions $(a)$-$(j)$
are satisfied. Indeed, $(a)$, $(c)$, and $(e)$-$(j)$ hold
immediately by the construction, in case of $(f)$ and $(h)$ because
of the premises being violated.

Proof of $(b)$, $(d)$ for $\alpha+1$: \ By $(b)$ and $(d)$ for
$\alpha$ it suffices to prove that $\la\K_\alpha\ra_k\wedge
\HH\subset[\w]^\w$  for any
 $\HH\in\sfH_\alpha$ and $k\in\w$.
Let us fix
$$\big\{ \la K^i_n:n\in\w\ra :i\leq k\big\}\subset\prod_{n\in\w}\K^\alpha_n $$
  and   $H\in\HH$, and find
 $n\geq k $  such that $m^{\HH}_n\leq m^\alpha_n$.
 Then $\bigcap_{i\leq k}K^i_n\in\la\K^\alpha_n\ra_n$ and therefore
 \begin{eqnarray*}
 \emptyset\neq\bigcap_{i\leq k}K^i_n\cap H\cap (m^{\HH}_n\setminus n)\subset \bigcap_{i\leq k}K^i_n\cap H\cap (m^{\alpha}_n\setminus n)\subset \\
 \subset\Big(\bigcap_{i\leq k}\bigcup\{K^i_n\cap m^\alpha_n:n\in\w\}\cap H\Big)\setminus n,
 \end{eqnarray*}
 which proves $(b)$ and $(d)$ for $\alpha+1$ and thus completes this case.
 \smallskip

 2).  $\bigcup_{n\in\w}\K^\alpha_n \not\subset \la\bigcup\sfF_\alpha\ra$.
  Two subcases of  $2)$ are possible.

 $2_0)$.   $\bigcup_{n\in\w}\K^\alpha_n \not \subset \la\bigcup\sfH_\alpha\ra_s$.
It follows that there exists $L_\alpha\in
\bigcup_{n\in\w}\K^\alpha_n$ such that
 $\w\setminus L_\alpha\in\la\bigcup\sfH_{\alpha}\ra_s^+\subset\la\bigcup\sfF_\alpha\ra^+$,
 which allows us to define
 $\sfF_{\alpha+1}$ as the $\la\ra_*$-saturation of $\sfF_\alpha\cup\big\{ \{\w\setminus L_\alpha\}\big\} $.
 Lemma~\ref{bidi} yields  $Z_\alpha\subset\w$
 such that $\{\phi_\alpha^{-1}[Z_\alpha],\phi_\alpha^{-1}[\w\setminus Z_\alpha]\}\subset \la\bigcup\sfF_{\alpha+1}\ra^+$.
Let   $\sfH_{\alpha+1}$ be the $(\sfF_{\alpha+1},\wedge)$-saturation of
  $$\sfH_{\alpha}\cup\{\{\phi_\alpha^{-1}[Z_\alpha],\w\setminus\phi_\alpha^{-1}[Z_\alpha]\}\}.$$
 Conditions  $(a)$-$(c)$, and $(e)$-$(j)$ hold for $\alpha+1$
immediately by the construction, in case of $(e)$ and $(g)$ because of the premises being violated,
and for $(f)$ we can simply take $K_\alpha$ to be $L_\alpha$.
Regarding $(d)$ for $\alpha+1$, by $(b)$ and $(d)$ for $\alpha$ it suffices to prove that
 $F_0\cap F_1\cap(\w\setminus L_\alpha)\cap X$ is infinite for any $F_0,F_1\in\bigcup\sfF_\alpha$ and $X\in\{\phi_\alpha^{-1}[Z_\alpha],\w\setminus\phi_\alpha^{-1}[Z_\alpha]\}$, which again has been guaranteed in the course of the construction above.

$2_1)$. $\bigcup_{n\in\w}\K^\alpha_n  \subset \la\bigcup\sfH_\alpha\ra_s$.
In this case we set $\sfF_{\alpha+1}=\sfF_\alpha$.

For every $\F\in \sfF_{\alpha+1}$   we can find an increasing sequence $\la l^{\F}_n:n\in\w\ra\in\w^\w$
such that $$ K\cap F\cap (l^{\F}_n\setminus n) \neq\emptyset $$
 for every $K\in \K^\alpha_n$
and $F\in \F$. Such an $l^{\F}_n$ exists because of the compactness of the involved sets
and $\K^\alpha_n\subset\la\bigcup\sfH_\alpha\ra_s\subset \la\bigcup\sfF_\alpha\ra^+$.
Since $|\sfF_\alpha|<\hot d$, there exists  an increasing sequence $\la l^\alpha_n:n\in\w\ra\in\w^\w$
such that $|\{n\in\w:l^{\F}_n\leq l^\alpha_n\}|=\w$ for all $\F\in \sfF_{\alpha+1}$.
Set
$$\mathcal L_\alpha=\Big\{\bigcup_{n\in\w}(K_n\cap l^\alpha_n):\la K_n:n\in\w\ra\in\prod_{n\in\w}\K^\alpha_n \Big\}.$$
Lemma~\ref{bidi} yields  $Z_\alpha\subset\w$
 such that $\{\phi_\alpha^{-1}[Z_\alpha],\phi_\alpha^{-1}[\w\setminus Z_\alpha]\}\subset \la\bigcup\sfF_{\alpha+1}\ra^+$.
From $\bigcup_{n\in\w}\K^\alpha_n  \not\subset \la\bigcup\sfF_\alpha\ra$
it follows that
  there exists $K_\alpha\in \bigcup_{n\in\w}\K^\alpha_n$ such that
 $\{\w\setminus K_\alpha\}\in \la\bigcup\sfF_\alpha\ra^+$.
Finally, we define $\sfH_{\alpha+1}$ to be
 the $(\sfF_{\alpha+1},\wedge)$-saturation of
 $$\sfH_{\alpha}\cup \big\{ \{\phi_\alpha^{-1}[Z_\alpha], \{\w\setminus\phi_\alpha^{-1}[Z_\alpha],\w\setminus K_\alpha\}\big\}\cup \{\mathcal L_\alpha\}.$$
It is straightforward  to check that conditions $(a)$-$(j)$ are
satisfied, the only slightly non-trivial step  being $\mathcal
L_\alpha\subset\la\bigcup\sfF_{\alpha+1}\ra^+$, which can be proved
analogously  to (but more easily) ``$\la\K_\alpha\ra_k\wedge
\HH\subset[\w]^\w$ for any
 $\HH\in\sfH_\alpha$ and $k\in\w$'' in case $1).$

This concludes our proof, since all possible cases have been
considered.
\end{proof}

In the proof of the following corollary we shall use the classical
result of Hurewicz \cite{Hur27} (see also \cite[Theorem~4.3]{COC2})
stating that $X\subset\mathcal P(\w)$ is Menger if and only if
$f[X]$ is not dominating for any continuous $f:X\to\w^\w$.

\begin{corollary} \label{cor01}
If a semifilter $\HH$ satisfies condition $(2)$ of
Theorem~\ref{0main}, then $\HH^2$ is not Menger and $\HH$ is not
Scheepers. In particular, there is no continuous surjection from
$\HH$ onto $\HH^2$.
\end{corollary}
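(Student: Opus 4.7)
The plan is to prove $\HH^2$ is not Menger by invoking the Hurewicz characterization recalled just above the corollary, and then derive the remaining assertions routinely. By Hurewicz, it suffices to exhibit a continuous $f\colon\HH^2\to\w^\w$ with dominating image.

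To build $f$, fix a finite-to-one surjection $\phi\colon\w\to\w$ and apply condition~(2) to obtain $X\subseteq\w$ with $Y_0:=\phi^{-1}[X]$ and $Y_1:=\phi^{-1}[\w\setminus X]$ both in $\HH^+$. For any $A\in\HH$, both $A\cap Y_0$ and $A\cap Y_1$ are infinite: if $A\cap Y_0$ were finite, then $\w\setminus Y_0\supseteq A\setminus n$ for some $n$, whence $\w\setminus Y_0\in\HH$ by the semifilter property, contradicting $Y_0\in\HH^+$. Set $f(A,B):=e_{(A\cap Y_0)\cup(B\cap Y_1)}$, the increasing enumeration of this always infinite set, which is continuous in $(A,B)$ by a standard argument.

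The main obstacle is then to show that $f[\HH^2]$ is dominating. Given an increasing $g\in\w^\w$, one must find $(A,B)\in\HH^2$ with $(A\cap Y_0)\cup(B\cap Y_1)$ sparse enough for its enumeration to dominate $g$. The strategy is to pick $\phi$ whose intervals $I_k=\phi^{-1}(k)$ have lengths governed by $g$, and to select $A,B\in\HH$ that meet each $I_k$ in few points on the $Y_0$- and $Y_1$-sides respectively, so that $A\cap Y_0$ and $B\cap Y_1$ are both sparse at the pace of $g$. The availability of such $A,B\in\HH$ is the technical crux and is secured by applying condition~(2) iteratively to auxiliary finite-to-one surjections encoding the various pacing patterns and diagonalizing against them, exploiting the abundance of splittings that (2) provides.

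The remaining assertions follow routinely. Recall the classical characterization that a space $X$ is Scheepers if and only if $X^n$ is Menger for every $n\in\w$; since $\HH^2$ is not Menger, $\HH$ cannot be Scheepers. For the ``in particular'' clause, the Menger property is preserved under continuous images, so a continuous surjection $h\colon\HH\to\HH^2$ would transfer Mengerness from $\HH$ (which holds in the intended application via Theorem~\ref{0main}) to $\HH^2$, contradicting what has just been established.
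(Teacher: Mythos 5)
Your overall plan for the first conclusion (exhibit a continuous map with dominating image and invoke the Hurewicz characterization) is the right spirit, but the specific map you propose does not work and the crucial step is only asserted, not proved. First, condition $(2)$, read as the paper applies it (with $\HH$ in the role of $\G^+$), produces sets $\phi^{-1}[Z]$ and $\w\setminus\phi^{-1}[Z]$ lying \emph{in} $\HH$, not merely in $\HH^+$; under your reading you are left with no elements of $\HH$ to ``select'' at all. More seriously, the sets that $(2)$ supplies are unions of \emph{entire} fibres of $\phi$, so they meet each interval $I_k$ either in nothing or in all of $I_k$; condition $(2)$ gives no elements of $\HH$ that are sparse, and nothing in the hypotheses guarantees that $\HH$ contains any sparse sets whatsoever. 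Consequently your claim that suitable $A,B\in\HH$ making $(A\cap Y_0)\cup(B\cap Y_1)$ sparse ``is secured by applying condition~(2) iteratively\dots and diagonalizing'' is precisely the missing proof, and I see no way to extract it from $(2)$. The paper's argument circumvents this: it first reduces $\HH^2$ to $\HH\cap(\sim\HH)$ (via the closed diagonal of $\HH\times(\sim\HH)$ and a projection), and then applies the map $X\mapsto\{n\in X:n+1\notin X\}$, whose value on a union of consecutive fibres of a monotone $\phi$ is \emph{automatically} sparse (contained in the range of the interval endpoints); that is the mechanism producing a dominating image, and your $f$ has no analogue of it --- indeed for $A=Y_0$, $B=Y_1$ your set $(A\cap Y_0)\cup(B\cap Y_1)$ is all of $\w$.

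The Scheepers part is also broken: the ``classical characterization'' you invoke is not the Scheepers property $\mathsf U_{\mathrm{fin}}(\mathcal O,\Omega)$ used in this paper but the strictly stronger property $\mathsf S_{\mathrm{fin}}(\Omega,\Omega)$, which is the one equivalent to all finite powers being Menger. Only the implication ``all finite powers Menger $\Rightarrow$ Scheepers'' holds in ZFC, and that is the wrong direction for your deduction; consistently there are Scheepers sets of reals with non-Menger square, which is exactly why the corollary states the two conclusions separately and why the paper proves non-Scheepersness by an independent argument (the Babinkostova--Ko\v{c}inac--Scheepers partition characterization of Scheepers, played against the splitting provided by condition $(2)$ for the finite-to-one map determined by the resulting finite blocks). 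Your treatment of the final clause is fine and agrees with the paper's intent: in the intended application $\HH=\G^+$ is Menger, so a continuous surjection onto $\HH^2$ would contradict the first conclusion.
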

\begin{proof}
Suppose that $\HH^2$ is Menger, then so is $\HH\times(\sim\HH)$, where
$\sim\HH=\{\w\setminus H:H\in\HH\}$, because $\sim\HH$ is homeomorphic to $\HH$.
Therefore
$\mathcal X:=\HH\times(\sim\HH)\cap\{\la X,X\ra:X\subset\w\}$ is also Menger being a closed subspace
of  $\HH\times(\sim\HH)$, and hence so is  $\HH\cap(\sim\HH)$ as the projection of $\mathcal X$ to
the first (as well as the second) coordinate. Let us note that $\HH\cap(\sim\HH)$ consists of infinite co-infinite subsets of $\w$, and hence the map $h:\HH\cap(\sim\HH)\to\w^\w$,
$h(X)=\{n\in X: (n+1)\not\in X\}$, must have a non-dominating range.

On the other hand, given a strictly increasing $x\in\w^\w$ with $x(0)=0$, let us consider the
monotone surjection $\phi:\w\to\w$ such that $\phi^{-1}(n)=[x(n),x(n+1))$ for all $n$.
It follows that there exists $Z\subset\w$ such that $\phi^{-1}[Z]\in \HH\cap(\sim\HH)$,
and it is easy to see that $h[\phi^{-1}[Z]]\subset x[\w]$. Thus for every $x$ as above there exists
$X\in \HH\cap(\sim\HH)$ with $h(X)$ contained in the range of $x$, which clearly yields
a dominating continuous range of $\HH\cap(\sim\HH)$ and thus leads to a contradiction.

Now, suppose that $\HH$ is Scheepers and consider the clopen cover
$\mathsf O=\{\mathcal O_k:k\in\w\}$ of $\HH$, where $\mathcal
O_k=\{X\subset\w:k\in X\}$. By \cite[Theorem~2]{BabKocSch04} (see
the equivalence of items 1 and 4 there) there exists a disjoint
sequence $\la\mathsf O_n:n\in\w\ra$ of finite subsets of $\mathsf O$
such that for every finite $\HH'\subset \HH$ there exists $n$ with
$\HH'\subset\bigcup\mathsf O_n$. Let $s_n\in [\w]^{<\w}$ be such
that $\mathsf O_n=\{\mathcal O_k:k\in s_n\}$ and consider the
finite-to-one surjection $\phi:\w\to\w$ such that $\phi^{-1}(n)=s_n$
for all $n\in\w$. It follows from the above that for every finite
$\HH'\subset\HH$ there exists $n\in\w$ such that $H\cap
s_n\neq\emptyset$ for all $H\in\HH'$.

On the other hand, using Theorem~\ref{0main}(2) pick $Z\subset\w$ be such that $\HH':=\{\phi^{-1}[Z],\w\setminus\phi^{-1}[Z]\}\subset\HH$
and note that there is no $n\in\w$ such that both sets
$$ \phi^{-1}[Z] \cap s_n=\phi^{-1}[Z]\cap\phi^{-1}(n) \mbox{ \ and \ } (\w\setminus\phi^{-1}[Z])\cap s_n= (\w\setminus\phi^{-1}[Z])\cap \phi^{-1}(n)$$
are non-empty, a contradiction.
\end{proof}

Let us note that the proof of Corollary~\ref{cor01} could be
actually extracted from that of \cite[Lemma~3.1]{BelTokZdo16}, but
we have nonetheless presented it for reader's convenience. Since
every Menger subspace of $\mathcal P(\w)$ has Menger square and is
Scheepers in the Miller model, Theorem~\ref{0main} cannot be proved
in ZFC, as follows from Corollary~\ref{cor01}.

Since each filter has a structure of a topological group,
Theorem~\ref{0main} combined with Corollary~\ref{cor01} answers
\cite[Question~1.8]{BelTokZdo16} in the affirmative, the coideal
$\mathcal G^+$ being the needed counterexample. Another motivation
for Corollary~\ref{cor01} comes from \cite{MedZdo16} where it was
proved that each filter on $\w$ is homeomorphic to its square.
According to \cite[Prop.~8]{MedZdo16}, this result fails for
semifilters, and the counterexample is a Borel comeager semifilter.
However, until now no  semifilter
$\F$ such that both $\F$ and $\F^+$ are non-meager, which in addition  is not homeomorphic to its square, was known, and
Theorem~\ref{0main} combined with Corollary~\ref{cor01}  gives a
consistent example of coideal like that.

For curiosity we exclude below one more possibility for spaces
related to filters $\G$ satisfying Theorem~\ref{0main} to be
homeomorphic.

\begin{corollary} \label{cor02}
If a filter  $\G$ satisfies  Theorem~\ref{0main}, then $\G\times\G^+$ is not homeomorphic to
 to $\G$.
\end{corollary}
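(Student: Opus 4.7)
The strategy is to assume $\G\cong\G\times\G^+$ for contradiction, upgrade this by iteration to $\G\cong\G\times(\G^+)^2$, and then use the resulting closed embedding of $(\G^+)^2$ into $\G$ to contradict Corollary~\ref{cor01}.

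Suppose we have a homeomorphism $f:\G\to\G\times\G^+$. Taking the Cartesian product of $f$ with $\mathrm{id}_{\G^+}$ gives a homeomorphism $\G\times\G^+\to\G\times(\G^+)^2$, which composed with $f$ yields $\G\cong\G\times(\G^+)^2$. Fix any point $X_0\in\G$ (for instance $X_0=\w$); since $\G\subset 2^\w$ is Hausdorff, the singleton $\{X_0\}$ is closed in $\G$, and therefore $\{X_0\}\times(\G^+)^2$ is a closed subspace of $\G\times(\G^+)^2$ canonically homeomorphic to $(\G^+)^2$.

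Now, Theorem~\ref{0main}(1) ensures that $\G$ is Menger, and the Menger property passes to closed subspaces, so the supposed chain $(\G^+)^2\hookrightarrow\G\times(\G^+)^2\cong\G$ would force $(\G^+)^2$ to be Menger as well. On the other hand $\G^+$ is a semifilter satisfying condition~$(2)$ of Theorem~\ref{0main} essentially by definition, so Corollary~\ref{cor01} applied with $\HH=\G^+$ yields that $(\G^+)^2$ is not Menger, a contradiction. The argument is fundamentally the one-line self-substitution $\G\cong\G\times\G^+\Rightarrow\G\cong\G\times(\G^+)^2$, and I do not anticipate any serious obstacle beyond ensuring that some point of $\G$ is genuinely closed so that its fiber gives a legitimate closed copy of $(\G^+)^2$.
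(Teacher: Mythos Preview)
Your argument is correct. Both your proof and the paper's hinge on Corollary~\ref{cor01}, namely that $(\G^+)^2$ is not Menger, but you reach the contradiction by a different mechanism.

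The paper squares both sides: it invokes \cite{MedZdo16} (every filter is homeomorphic to its square) to conclude that $\G^2\cong\G$ is Menger, and then observes that $(\G\times\G^+)^2$ projects continuously onto $(\G^+)^2$, which is not Menger; hence $\G^2\not\cong(\G\times\G^+)^2$ and therefore $\G\not\cong\G\times\G^+$. Your route instead iterates the assumed homeomorphism once to obtain $\G\cong\G\times(\G^+)^2$ and then reads off a closed copy of $(\G^+)^2$ inside the Menger space $\G$. The advantage of your approach is that it is self-contained: you do not need the external result from \cite{MedZdo16}. One small simplification you could make is to skip the closed-fiber step and just note that $\G\cong\G\times(\G^+)^2$ projects continuously onto $(\G^+)^2$, using that the Menger property is preserved by continuous images; this is exactly the device the paper uses at the analogous point.
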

\begin{proof}
 $\G^2$ is Menger by \cite{MedZdo16} (see also \cite[Claim~5.5]{ChoRepZdo15} for a simpler proof),
whereas $(\G\times\G^+)^2$ can be mapped continuously onto $(\G^+)^2$ and hence is not Menger.
\end{proof}

\section{Examples by Forcing}

This section was inspired by \cite[\S~4]{GuzHruMar??} as well as
\cite[\S~6]{Laf89}. More precisely, one can obtain a filter $\G$
such as in Theorem~\ref{0main} by countably complete forcing, namely
let $\IP$ be the poset consisting of conditions $p=\la \mathsf
F,\mathsf H\ra$ such that
\begin{itemize}
\item[$(i)$] $\mathsf F$ is a countable collection of compact subsets of $[\w]^\w$ such that
$\bigcup\mathsf F$ is centered;
\item[$(ii)$] $\mathsf H$ is a countable collection of
compact subsets of $[\w]^\w$ such that $\mathsf F\subset\mathsf H$;
\item[$(iii)$] $\bigcup\mathsf H\subset \la\bigcup\mathsf F\ra^+$.
\end{itemize}
A condition $\la \sfF_1,\sfH_1\ra$ is stronger than $\la
\sfF_0,\sfH_0\ra$ (and written $\la \sfF_1,\sfH_1\ra \leq \la
\sfF_0,\sfH_0\ra$) if $\sfF_1\supset\sfF_0$ and
$\sfH_1\supset\sfH_0$.

\begin{theorem} \label{0main_f}
Let $G$ be a $\IP$-generic filter, $\G=\bigcup\{\bigcup\sfF:\exists\sfH\: (\la\sfF,\sfH\ra\in G)\}$,
and $\HH=\bigcup\{\bigcup\sfH:\exists\sfF\: (\la\sfF,\sfH\ra\in G)\}$.
Then $\G$ is a filter, $\HH=\G^+$, and both $\G$ and $\HH$ are Menger.
Moreover, for every  surjection $\phi:\w\to\w$ there exists $X\subset\w$ such that
$\phi^{-1}[X],\phi^{-1}[\w\setminus X]\in\G^+$.
\end{theorem}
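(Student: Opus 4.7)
My plan is to run density arguments for $\IP$ inside the ground model $V$ that mirror the transfinite recursion from Theorem~\ref{0main}, with the crucial simplification that each condition is countable, so the cardinal hypotheses $|\sfF|<\min\{\hot d,\hot r\}$ and $|\sfF|<\hot c$ appearing in that recursion (and in Lemma~\ref{bidi}) become automatic. The key preliminary is that $\IP$ is $\sigma$-closed: for a decreasing sequence $\la p_n=\la\sfF_n,\sfH_n\ra:n\in\w\ra$, the pair $p_\w=\la\bigcup_n\sfF_n,\bigcup_n\sfH_n\ra$ is again a condition because any finite subset of $\bigcup_n\bigcup\sfF_n$ lies inside a single $\bigcup\sfF_{n_0}$, and any $H\in\bigcup_n\bigcup\sfH_n$ meets each finite intersection from $\bigcup_n\bigcup\sfF_n$ infinitely by comparing with a sufficiently large $p_n$. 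Consequently $\IP$ adds no new reals, so every sequence of compact subsets of $[\w]^\w$ appearing in $V[G]$ is already in $V$; this is what enables the density arguments below to be carried out inside $V$.

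Next, the assertion that $\G$ is a filter and the identification $\HH=\G^+$ follow from routine density arguments. Appending to $\sfF_p$ a singleton compact family $\{F_1\cap F_2\}$ or $\{Y\}$ (with $Y\supset F\sm n$ for some $F\in\bigcup\sfF_p$) preserves $(i)$–$(iii)$ because $\la\bigcup\sfF_p\ra$ is unchanged, so $\G$ is indeed closed under finite intersections and supersets. For $\HH=\G^+$, the inclusion $\HH\sbst\G^+$ is immediate from $(iii)$; for the reverse, given any $X\sbst\w$ in $V$ either $X\in\la\bigcup\sfF_p\ra^+$, in which case appending $\{X\}$ to $\sfH_p$ preserves $(iii)$ and places $X\in\HH$, or $\w\sm X\in\la\bigcup\sfF_p\ra$, so that already $\w\sm X\in\G$ and $X\notin\G^+$. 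This is the exact forcing counterpart of the dichotomy opening case $2)$ of Theorem~\ref{0main}.

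For the bisplitting clause, any surjection $\phi:\w\to\w$ lies in $V$ by $\sigma$-closure; Lemma~\ref{bidi} applied to the countable $\la\ra_*$-saturation of $\sfF_p$ (whose cardinality is trivially below $\min\{\hot d,\hot r\}$) produces $Z\sbst\w$ with $\{\phi^{-1}[Z],\phi^{-1}[\w\sm Z]\}\sbst\la\bigcup\sfF_p\ra^+$, and extending $\sfH_p$ by the $(\sfF_p,\wedge)$-saturation of this pair is a legitimate condition below $p$ witnessing the splitting.

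Finally, for the Menger assertions I would invoke characterisation $(3)$ of Theorem~\ref{basics}: any sequence $\la\K_n:n\in\w\ra$ of compact subsets of $[\w]^\w$ in $V[G]$ lies in $V$, so it suffices to prove density, for each such sequence in $V$, of conditions forcing either the failure of the hypothesis $\bigcup_n\K_n\sbst\G^+$ (respectively $\sbst\G$) or the conclusion of $(3)$. Given $p$, one of the three alternatives of the proof of Theorem~\ref{0main} must occur: either we diagonalise $\sfF_p$ to produce the compact family $\K_\alpha$ (case $1$), or we append $\{\w\sm L_\alpha\}$ to $\sfF_p$ for a witnessing $L_\alpha$ (case $2_0$), or we diagonalise $\sfH_p$ to produce $\mathcal L_\alpha$ (case $2_1$). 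The main obstacle, exactly as in Theorem~\ref{0main}, is verifying that condition $(iii)$ persists through each of these extensions; this is handled by the same compactness-and-diagonalisation bookkeeping used there, noting that countably many compact-set constraints are trivially dominated by a single increasing $\la m_n\ra\in\w^\w$.
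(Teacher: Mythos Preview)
Your proposal is correct and follows precisely the approach the paper itself indicates: the paper does not give a proof of Theorem~\ref{0main_f} at all, but explicitly leaves it to the reader with the remark that it is ``more or less a kind of a repetition of that of Theorem~\ref{0main}, with the only difference being that now everything we need would happen `generically','' and your density arguments (with $\sigma$-closure replacing the cardinal hypotheses) are exactly that repetition. One small remark: the statement of Theorem~\ref{0main_f} says ``surjection'' rather than ``finite-to-one surjection'' as in Theorem~\ref{0main}, but your invocation of Lemma~\ref{bidi} (which requires finite-to-one) is what the paper clearly intends, since the general claim fails whenever a condition already contains a full fiber $\phi^{-1}(n)$.
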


We leave the proof of Theorem~\ref{0main_f} to the reader, as it is
more or less  a kind of a repetition of that of Theorem~\ref{0main},
with the only difference being that now everything we need would
happen ``generically'', i.e., the set of suitable conditions is
dense, while for Theorem~\ref{0main} we had to ``manually''
guarantee all that by going over appropriate enumerations.

Instead, we shall address a similar poset tailored to analyze the
density one filter  $\mathcal Z^*$ on $\w$ consisting of those
$Z\subset\w$ such that $\lim_{n\to\infty}\frac{|Z\cap n|}{n}=1$. It
is a well known open problem   whether there exists a proper poset
adding no dominating reals but adding an infinite subset of $\w$
almost included into all ground model elements of $\mathcal Z^*$,
see, e.g., \cite[Question~2.12]{Hru11}. This  motivated us to
introduce the following  poset.

Let $\IQ$ be the set of conditions
$\la\mathsf F,\HH, \ve\ra$
such that
\begin{itemize}
\item[$(i)$] $\mathsf F$ is a countable collection of compact subsets of $[\w]^\w$ such that
$\bigcup\mathsf F$ is centered;
\item[$(ii)$] $\HH$ is a countable subset of $[\w]^\w$ and
$\ve:\HH\to (0,1]$; and
\item[$(iii)$] For every $F\in\la\sfF\ra$ and $H\in\HH$ there exists $X\in [\w]^\w$ such that
$\lim_{n\in X}\frac{|F\cap n|}{n}=1$
and $\liminf_{n\in X}\frac{|F\cap H\cap n|}{n}\geq\ve(H)$.
\end{itemize}
A condition $\la\mathsf F_1,\HH_1,\ve_1\ra$ is stronger than
$\la\mathsf F_0,\HH_0,\ve_0\ra$ (and written $\la\mathsf
F_1,\HH_1,\ve_1\ra\leq  \la\mathsf F_0,\HH_0,\ve_0\ra$) if
$\sfF_1\supset\sfF_0$,  $\HH_1\supset\HH_0$, and
$\ve_0=\ve_1\uhr\HH_0$. Clearly, $\IQ$ is countably closed.

\begin{theorem} \label{0main_density_z}
Let $G$ be a $\IQ$-generic filter and $\G=\bigcup\{ \bigcup \sfF\: :\: \exists \HH\exists\ve\:(\la\sfF,\HH,\ve\ra\in G)\}$.
Then $\G$ is a filter, $\mathcal Z^*\subset\G$, and $\G^+$ is Menger.
\end{theorem}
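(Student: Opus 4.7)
The statement has three assertions: $\G$ is a filter, $\mathcal Z^*\sbst\G$, and $\G^+$ is Menger. The first two follow from fairly routine density arguments together with the countable closure of $\IQ$, while the third is the main content and uses Theorem~\ref{basics}.

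To verify countable closure, note that for any $\w$-decreasing chain $\la\sfF_k,\HH_k,\ve_k\ra$ the coordinatewise union is a valid lower bound: $\bigcup_k\sfF_k$ and $\bigcup_k\HH_k$ are countable, and (i)--(iii) are all local properties of finite sub-collections, so they pass to the union. In particular $\IQ$ adds no new reals. Closure of $\G$ under finite intersections and supersets then reduces to showing that the sets of conditions whose $\sfF$-component contains a singleton $\{Y\}$, where $Y$ is a prescribed finite intersection of elements of $\bigcup\sfF$ or a prescribed superset of such an element, are dense; appending such a $\{Y\}$ does not enlarge $\la\bigcup\sfF\ra$, so (i) and (iii) are preserved for free. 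For $\mathcal Z^*\sbst\G$, I fix $Z\in\mathcal Z^*\cap V$ and $p=\la\sfF,\HH,\ve\ra\in\IQ$ and verify that $q=\la\sfF\cup\{\{Z\}\},\HH,\ve\ra\leq p$ is a valid extension: centeredness uses density $1$ of $Z$, and for (iii) any element of $\la\bigcup\sfF(q)\ra$ equals $F_0\cap Z$ (modulo a finite set) with $F_0\in\la\bigcup\sfF\ra$, and the elementary inequality
$$\frac{|F_0\cap Z\cap H\cap n|}{n}\geq\frac{|F_0\cap H\cap n|}{n}-\frac{|n\sm Z|}{n}$$
(and its analogue without $H$) transfers the required density bounds from $(F_0,H)$ to $(F_0\cap Z,H)$, since $|n\sm Z|/n\to 0$.

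For Mengerness of $\G^+$, I verify condition $(3)$ of Theorem~\ref{basics}. Given in $V[G]$ a sequence $\la\K_n\ra$ of compact subsets of $\G^+$, countable closure places $\la\K_n\ra\in V$, and a forcing-theorem argument produces $p=\la\sfF,\HH,\ve\ra\in G$ forcing $\K_n\sbst\G^+$ for every $n$. Working in $V$, I enumerate $\la F_j\ra$ the countable family of finite intersections of elements of $\bigcup\sfF$ (taking $F_0=\w$); the hypothesis on $p$ implies that every $K\in\bigcup_n\K_n$ meets every $F_j$ infinitely, and compactness of each $\K_n$ lets me choose $m_n>m_{n-1}$ inductively with $K\cap F_j\cap[m_{n-1},m_n)\neq\emptyset$ for every $K\in\K_n$ and $j\leq n$. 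Each diagonal $L_{\vec K}=\bigcup_n(K_n\cap[m_{n-1},m_n))$ then meets every $F\in\la\bigcup\sfF\ra$ infinitely.

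The principal obstacle is upgrading this to $L_{\vec K}\in\G^+$ in $V[G]$, since $\G$ strictly extends $\la\bigcup\sfF\ra$. My plan is to strengthen the inductive choice of $m_n$ so as to secure a uniform positive-density lower bound $\delta>0$ for $|K\cap F_j\cap[m_{n-1},m_n)|/(m_n-m_{n-1})$, using condition (iii) for $p$ to locate a long sub-interval of $[m_{n-1},m_n)$ on which $F_j$ has density close to $1$ and insisting $K\cap F_j$ occupy a proportion at least $\delta$ of it. A propagation argument then transfers the density-$1$ behaviour from $F_j$ to any $F'\in\la\bigcup\sfF(q')\ra$ for $q'\leq p$ in $G$, via (iii) applied to $q'$, from which the uniform $\delta$ (with $F_0=\w$) forces $L_{\vec K}\cap F'$ to be infinite. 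Since $\{L_{\vec K}:\vec K\in\prod_n\K_n\}$ can be uncountable while $\HH$ must stay countable, this uniform bound must be absorbed into $\la m_n\ra$ itself rather than into $\HH$; getting the compactness of $\K_n$ to deliver such a uniform $\delta$ is the most delicate step.
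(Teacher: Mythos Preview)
Your application of Theorem~\ref{basics} has the roles of $\G$ and $\G^+$ reversed, and this is fatal. To prove that a semifilter $\F$ is Menger via condition~$(2)$ or~$(3)$, one must take compact sets $\K_n\subset\F^+$ and show the diagonal union lies in $\F^+$. Here the target is $\F=\G^+$, so $\F^+=(\G^+)^+=\G$ (for a filter $\G$ one always has $(\G^+)^+=\G$). Thus the correct task is: given compact $\K_n\subset\G$, find $\la m_n\ra$ so that every diagonal union $\bigcup_n(K_n\cap m_n)$ lies in $\G$. You instead take $\K_n\subset\G^+$ and aim for $L_{\vec K}\in\G^+$; that is exactly the criterion for $\G$ (not $\G^+$) to be Menger. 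But $\G$ is \emph{not} Menger: since $\mathcal Z^*\subset\G$ and $\mathcal Z^*$ cannot be extended to any $P^+$-filter, Corollary~\ref{obs_obv} rules it out. So the ``principal obstacle'' you describe is not merely delicate---it is insurmountable, because the statement you are trying to establish is false. Your proposed positive-density lower bound on $|K\cap F_j\cap[m_{n-1},m_n)|$ cannot exist uniformly, since $\G^+$ contains sets of arbitrarily small density.

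The paper's argument proceeds in the correct direction and has a different architecture. Given compact $\K_n\subset\G$ (in $V$, by countable closure), one first shows by a density argument that there is a condition $\la\sfF_1,\HH_1,\ve_1\ra\in G$ with $\bigcup_n\K_n\subset\la\bigcup\sfF_1\ra$; this uses a dichotomy (either some $\w\setminus K$ can be legally added to $\HH$, contradicting $K\in\G$, or else the density condition~(iii) forces every $K$ to inherit the ``density~$1$ along a subsequence'' behaviour, so the entire family $\{\K_n:n\in\w\}$ can be adjoined to $\sfF$). Once the $\K_n$ are inside $\sfF_1$, a second density argument builds $\la m_i\ra$ so that the compact set $\K=\{\bigcup_i(K_i\cap m_i):\vec K\in\prod_i\K_i\}$ can itself be adjoined to $\sfF$ while preserving~(iii); hence $\K\subset\G$. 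The work lives entirely on the $\G$-side, exploiting that elements of $\G$ (unlike elements of $\G^+$) satisfy the density constraint built into the poset.
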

\begin{proof}
To see that $\G$ is a filter note that for any $\la\sfF,\HH,\ve\ra\in\IQ$ and finite $\mathcal F\subset\la \bigcup\sfF\ra$
we have $\la \sfF\cup\{ \{\bigcap \mathcal F\}\},\HH,\ve\ra\in \IQ$, and hence the set of all conditions
in $\IQ$ whose first component
contains  $\{\bigcap\mathcal F\}$ as an element, is dense below $\la\sfF,\HH,\ve\ra$.

The fact that $\Z^*\subset\G$, follows from the observation that for
any $T\in\mathcal Z^*$ and $\la\sfF,\HH,\ve\ra\in\IQ$, we have that
$\la\sfF\cup\{\{T\}\},\HH,\ve\ra\in\IQ$. Indeed, if for some
 $F\in\la\sfF\ra$, $H\in\HH$, and $X\in [\w]^\w$ we have
$\lim_{n\in X}|\frac{|F\cap n|}{n}|=1$
and $\liminf_{n\in X}|\frac{|F\cap H\cap n|}{n}|\geq\ve(H)$, then
also $\lim_{n\in X}|\frac{|(F\cap T)\cap n|}{n}|=1$
and $\liminf_{n\in X}|\frac{|(F\cap T)\cap H\cap n|}{n}|\geq\ve(H)$.

Thus we are left with the task of showing that for every sequence
$\la \K_n:n\in\w\ra$ of compact subspaces of $\G$ there exists an
increasing sequence $\la m_n:n\in\w\ra\in\w^\w$ such that for every
$\la K_n:n\in\w\ra\in\prod_{n\in\w}{\K_n}$ we have
$\bigcup_{n\in\w}(K_n\cap m_n)\in\G$. Let
$\la\sfF_0,\HH_0,\ve_0\ra\in G$ be such that
$\la\sfF_0,\HH_0,\ve_0\ra\forces\bigcup_{n\in\w}\K_n\subset\G$. We
claim that there exists $\la\sfF_1,\HH_1,\ve_1\ra\in G$ such that
$\bigcup_{n\in\w}\K_n\subset\la\bigcup\sfF_1\ra$. Indeed, given any
condition $\la\sfF ,\HH,\ve\ra\leq\la\sfF_0,\HH_0,\ve_0\ra$ two
cases are possible.

1. There exists $K\in \la\bigcup_{n\in\w}\K_n\ra$ such that $\w\setminus K\in\HH$,
or there exists  $\delta>0$ such that
  for every $F\in\la\sfF\ra$ there exists $X\in [\w]^\w$ with properties
$\lim_{n\in X}\frac{|F\cap n|}{n}=1$
and $\liminf_{n\in X}\frac{|F\cap (\w\setminus K)\cap n|}{n}\geq\delta$.
In this case
$ \la\sfF,\HH\cup\{\w\setminus K\},\ve'\ra\forces K\not\in\G$, where
$\ve'\uhr\HH=\ve$ and $\ve'(\w\setminus K)=\delta$ if $\w\setminus K\not\in\HH$. The latter leads to a contradiction.

2.  For every $K\in \la\bigcup_{n\in\w}\K_n\ra$ we have $\w\setminus K\not\in\HH$,
and for every  $\delta>0$ there exists
$F\in\la\sfF\ra$ such that for every $X\in [\w]^\w$ with $\lim_{n\in X}\frac{|F\cap n|}{n}=1$ the following holds:
\begin{equation} \label{eq11}
\forall^* n\in X\: \frac{|F\cap (\w\setminus K)\cap n|}{n}<\delta.
\end{equation}
Let us note that if for some $\delta>0$ an element $F\in\la\sfF\ra$ is a witness for Equation~\ref{eq11},
then any smaller $F'\in\la\sfF\ra$ is also one.
Given any $F\in\la\sfF\ra$ and $K\in\la\bigcup_{n\in\w}\K_n\ra$, let us construct a
decreasing sequence $\la F_i:i\geq 1\ra$ of elements of
$\la\sfF\ra$ such that $F_0=F$ and
\begin{equation} \label{eq12}
\forall X\in [\w]^\w\: \Big(\lim_{n\in X}\frac{|F_i\cap n|}{n}=1 \Rightarrow
\forall^* n\in X\: \frac{|F_i\cap (\w\setminus K)\cap n|}{n}<\frac{1}{i}\Big).
\end{equation}
Equation~\ref{eq12} implies
\begin{equation} \label{eq13}
\forall X\in [\w]^\w\: \Big(\lim_{n\in X}\frac{|F_i\cap n|}{n}=1 \Rightarrow
\forall^* n\in X\: \frac{|F_i\cap K\cap n|}{n}\geq 1-\frac{2}{i}\Big).
\end{equation}
Let us fix now any $H\in\HH$ and
 for every $i$ find $X_i\in [\w]^\w$
such that $\lim_{n\in X_i}\frac{|F_i\cap n|}{n}=1$
and $\frac{|F_i\cap H\cap n|}{n}>\ve(H)-\frac{1}{i}$ for all $n\in X_i$.
This is possible by item $(iii)$ of the definition of $\IQ$.
Removing finitely many elements of $X_i$, if necessary, by Equation~\ref{eq13} we may assume that
$\frac{|F_i\cap K\cap  n|}{n}>1-\frac{2}{i}$  for all $n\in X_i$.
Now let $\la n_i:i\in\w\ra\in\w^\w$ be an increasing sequence
such that $n_i\in X_i$ and $X(F,H,K)=\{n_i:i\in\w\}$.
It follows that
$\frac{|F_i\cap H\cap n_i|}{n_i}>\ve(H)-\frac{1}{i}$ and $\frac{|F_i\cap K\cap  n_i|}{n_i}>2-1/i$  for all $i\in \w$.
Since $F_i\subset F$ for all $i$, we have that
$\frac{|F\cap H\cap n_i|}{n_i}>\ve(H)-\frac{1}{i}$ and $\frac{|F\cap K\cap  n_i|}{n_i}>1-2/i$
and therefore $\frac{F\cap K\cap H\cap n_i}{n_i}>\ve(H)-\frac{3}{i}$
  for all $i\in \w$.
Thus $\la\sfF\cup\{\K_n:n\in\w\},\HH,\ve\ra\in \IQ$, where for each
$F\in\la \sfF\ra$, $H\in\HH$, and $K\in\la\bigcup_{n\in\w\K_n}\ra$ the infinite set $X(F,H,K)$
is such as required in $(iii)$ for $F\cap K$ and $H$.

Summarizing, we have proved that for any
$\la\sfF ,\HH,\ve\ra\leq\la\sfF_0,\HH_0,\ve_0\ra$
we have $\la\sfF\cup\{\K_n:n\in\w\},\HH,\ve\ra\in \IQ$, and thus there exists
$\la\sfF_1,\HH_1,\ve_1\ra\in G$ with $\bigcup_{n\in\w}\K_n\subset\la\bigcup\sfF_1\ra$.

Let us now fix $\la\sfF ,\HH,\ve\ra\leq\la\sfF_1,\HH_1,\ve_1\ra$ and  enumerations
$\la H_i:i\in\w \ra$ of $\HH$ as well as $\la\F_i:i\in\w\ra$ of $\sfF$.
Set
$$\mathcal L_i=\big\{\bigcap\mathcal Y\: :\: \mathcal Y\in [\bigcup_{j\leq i}\F_j\cup\bigcup_{j\leq i}\K_j]^{\leq i}\big\}$$
and by recursion over $i$ construct an increasing number sequence
$\la m_i:i\in\w\ra$ such that for every $L\in\mathcal L_i$ and $j\leq i$ there exists
$n_j\in [m_{i-1},m_{i})$ such that
$\frac{|L\cap n_j|}{n_j}>1-\frac{1}{i}$ and $\frac{|L\cap H_j\cap n_j|}{n_j}>\ve(H_j)(1-\frac{1}{i})$.
This is possible by $(iii)$ and the compactness of $\mathcal L_i$.
Letting
$$\K=\Big\{\bigcup_{i\in\w}(K_i\cap m_i)\: :\:
\la K_i:i\in\w\ra\in\prod_{i\in\w}{\K_i}\Big\},$$
 we claim that
 $\la \sfF\cup\{\K\},\HH,\ve\ra\in\IQ$. Indeed, let us fix $k\in\w$ and a family
 $\{Y_s:s\leq k\}\subset\K$, where
 $Y_s=\bigcup_{i\in\w}(K_{i,s}\cap m_i)$ for some $K_{i,s}\in\K_i$.
 Also, let us fix a family  $\{R_s:s\leq k\}\subset\bigcup_{s\leq k}\F_s$
 and set $R=\bigcap_{s\leq k}R_s$. Then for every
 $i\geq 2k+1$ and $s\leq k$ we  have that
 $$\mathcal Y:=\{R_s:s\leq k\}\cup\{K_{i,s}:s\leq k\}\in [\bigcup_{j\leq i}\F_j\cup\bigcup_{j\leq i}\K_j]^{\leq i},$$
and therefore
for every and $j\leq i$ there exists
$n_j\in [m_{i-1},m_{i})$ such that
$\frac{|\bigcap\mathcal Y\cap n_j|}{n_j}>1-\frac{1}{i}$ and $\frac{|\mathcal Y\cap H_j\cap n_j|}{n_j}>\ve(H_j)(1-\frac{1}{i})$.
Since
\begin{equation} \label{eq_14}
 \bigcap\mathcal Y\cap n_j\subset\bigcap\{R_s:s\leq k\}\cap \bigcap\{Y_s:s\leq k\} \cap n_j,
 \end{equation}
(because $K_{i,s}\cap m_i\subset Y_s\cap m_i$ for all $s\leq k$), we
conclude that $(iii)$ is satisfied for $\la
\sfF\cup\{\K\},\HH,\ve\ra$, and hence it is a member of $\IQ$. Thus
for arbitrary $\la\sfF ,\HH,\ve\ra\leq\la\sfF_1,\HH_1,\ve_1\ra$ we
have $\la\sfF\cup\{\K\} ,\HH,\ve\ra\in\IQ$, and hence there exists
$\la\sfF_2 ,\HH_2,\ve_2\ra\in G$ with $\K\in\sfF_2$, which yields
$\K\subset\G$. It remains to apply Theorem~\ref{basics}.
\end{proof}

It is well-known and easy to see that the filter $\mathcal Z^*$ cannot be extended to any $P^+$-filter, and hence 
$\G$ from Theorem~\ref{0main_density_z} is not Menger by Corollary~\ref{obs_obv}. In addition,
the Menger co-ideal
$\G^+$  does not contain any $P$-point, and hence also no Menger ultrafilter.
We do not know whether such examples can be obtained in ZFC, see Section~\ref{problem}
for more  questions related to $\mathcal Z$.

\section{Impossibility Results} \label{imp_res}

Here we show that Theorem~\ref{0main} cannot be proved without additional set-theoretic assumptions,
see Theorem~\ref{0_anti_main} below. Following \cite{BelTokZdo16}
for a semifilter $\F$ we denote by $\IP_\F$ the poset consisting
of
all partial maps $p$ from $\w\times\w$ to $2$ such that for every
$n\in\w$ the domain of $p_n:k\mapsto p(n,k)$ is an element of
$\sim\F=\{\w\setminus F:F\in\F\}$. If, moreover, we assume that additionally
$\mathrm{dom}(p_n)\subset
\mathrm{dom}(p_{n+1})$ for all $n$, the corresponding poset will
be
denoted by $\IP_\F^*$. A condition $q$ is stronger than $p$ (in
this
case we write $q\leq p$) if $p\subset q$. For filters $\F$ the
poset
$\IP_\F^*$ is obviously dense in $\IP_\F$, and the latter is
proper
and $\w^\w$-bounding if $\F$ is a non-meager $P$-filter
\cite[Fact
VI.4.3, Lemma~VI.4.4]{She_propimp}.
This result has the following topological counterpart
proved  in \cite{BelTokZdo16}. Recall that a poset $\IP$ is called
\emph{$\w^\w$-bounding} if $(\w^\w)^V$ is dominating in $V^{\IP}$.

\begin{lemma} \label{w^w-bound}
If $\F^+$ is a Menger semifilter, then both  $\IP_\F$ and
$\IP_\F^*$
are proper and $\w^\w$-bounding.
\end{lemma}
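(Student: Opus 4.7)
The plan is to establish properness and $\w^\w$-bounding at once for $\IP_\F^*$; the case of $\IP_\F$ is analogous, and (for $\F$ a filter) even reduces to $\IP_\F^*$ by density. Fix a condition $p_0\in\IP_\F^*$, a name $\dot f$ for an element of $\w^\w$, and a countable elementary submodel $M\prec H(\chi)$ containing $\F,\IP_\F^*,p_0,\dot f$. I would aim to produce a single master condition $q\leq p_0$ which is $(M,\IP_\F^*)$-generic and forces $\dot f(k)\leq g(k)$ for every $k$, for some $g\in V\cap\w^\w$; this simultaneously yields both conclusions of the lemma.

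Working inside $M$, I would first enumerate the dense subsets of $\IP_\F^*$ lying in $M$ as $\{D_k:k\in\w\}$ and construct a countably branching tree $T\subseteq\w^{<\w}$ together with a system $\{p_t:t\in T\}\subseteq M\cap\IP_\F^*$ such that $p_\emptyset=p_0$, each $p_t$ lies in $D_{|t|}$ and decides $\dot f(|t|)=:k_t$, and the children $\{p_{t^{\frown}\la i\ra}:i\in\w\}$ form a maximal antichain below $p_t$ consisting of such deciding conditions. At every node $t$ and row $n<|t|$ one then has the \emph{gap} $F_{t,n}:=\w\setminus\mathrm{dom}(p_{t,n})\in\F\subseteq\F^+$. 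The central task is the amalgamation of $T$ into $q$: for each row $n$ one must supply a set $F^*_n\in\F$ contained modulo finite inside $F_{t,n}$ along a thick subtree $T^*\subseteq T$, so that $\w\setminus F^*_n$ may serve as $\mathrm{dom}(q_n)$.

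This is precisely where the hypothesis that $\F^+$ is Menger is called upon, via Theorem~\ref{basics}(3) applied to $\F^+$: since $(\F^+)^+=\F$ for filters, the theorem supplies, for any sequence $\la\K_k:k\in\w\ra$ of compact subsets of $\F$, an increasing sequence $\la m_k\ra$ such that any selector $K_k\in\K_k$ produces $\bigcup_k(K_k\cap[m_{k-1},m_k))\in\F$. I would take $\K_k$ to be the closure in $2^\w$ of appropriate finite intersections of the $F_{t,n}$'s coming from nodes at level $k$, compact by construction, and then use the resulting $\la m_k\ra$ to prune $T$ to $T^*$ on which each row-gap $F^*_n$ is forced to sit inside $\F$. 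The function $g(k):=\max\{k_t:|t|=k\}\in V$ is then forced by $q$ to dominate $\dot f$, while $(M,\IP_\F^*)$-genericity follows because $T^*$ still meets every $D_k\in M$.

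The principal obstacle I anticipate is organising the compact sets $\K_k$ and the tree-pruning so that the piecewise-union conclusion of Theorem~\ref{basics}(3) actually produces bona fide filter elements usable as gap complements, simultaneously for all rows $n$; this requires keeping track of finitely many rows per level and ensuring that the topological closures added to make $\K_k$ compact do not introduce spurious limit sets outside $\F$. Handling the semifilter case of the lemma, where $(\F^+)^+$ may strictly contain $\F$, is a further technical point but can be absorbed by replacing $\F$ with the filter it generates whenever convenient, at the cost of minor bookkeeping.
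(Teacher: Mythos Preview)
The paper does not give its own proof of this lemma; the sentence immediately preceding it reads ``This result has the following topological counterpart proved in \cite{BelTokZdo16}'', and the lemma is simply quoted from there. So there is nothing in the present paper to compare your argument against directly.

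As for your sketch on its own merits: the overall architecture --- build a fusion tree inside $M$ and then invoke Theorem~\ref{basics} for the Menger semifilter $\F^+$ (whose dual $(\F^+)^+$ equals $\F$) to amalgamate the row-gaps into a master condition --- is the right idea and is essentially how the result is obtained in \cite{BelTokZdo16}. Two remarks, one correcting a worry you raised and one pointing to a genuine gap.

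First, your last concern is unfounded: for \emph{any} semifilter $\F$ one has $(\F^+)^+=\F$. Indeed, if $Y\notin\F$ then no $F\in\F$ is almost contained in $Y$ (else $Y\in\F$ by upward closure), so $\w\setminus Y$ meets every $F\in\F$ in an infinite set, i.e.\ $\w\setminus Y\in\F^+$; since $(\w\setminus Y)\cap Y=\emptyset$, this witnesses $Y\notin(\F^+)^+$. So the semifilter case needs no extra bookkeeping.

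Second, the obstacle you flag yourself is real and your sketch does not overcome it. With a \emph{countably} branching tree whose children form maximal antichains, level $k$ carries infinitely many nodes, hence infinitely many gap-sets $F_{t,n}$. Passing to the closure in $2^\w$ of this countable family does give a compact set, but there is no reason for that closure to remain inside $\F$, and Theorem~\ref{basics} requires $\K_k\subset(\F^+)^+=\F$. The standard remedy is a finitely branching fusion in the style of Laver or Sacks: at stage $k$ one keeps only a \emph{finite} front of conditions (say indexed by $2^k$, or by the finitely many possible restrictions of the generic to a fixed finite rectangle), so that $\K_k$ is literally a finite subset of $\F$ and hence trivially compact. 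The Menger property is then applied once, after the entire fusion tree has been built, to produce the sequence $\la m_k\ra$ that controls the growth of all row-domains simultaneously; the bound $g(k)$ on $\dot f(k)$ is read off from the finitely many decided values at level $k$. Your maximal-antichain tree is the wrong combinatorial skeleton for this argument.
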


We shall need the following game of length $\omega$ on a topological space $X$:
 In the $n$th  move player $I$ chooses an  open cover
$\U_n$ of $X$, and player $\mathit{II}$ responds by choosing a
finite $\V_n\subset \U_n$. Player $\mathit{II}$ wins the game if $\bigcup_{n\in\omega}
\bigcup\V_n =X$. Otherwise, player $I$ wins.  We shall call this game
 \emph{the Menger game} on $X$.
 It is well-known  that
  $X$  is Menger  if and only if
 player $I$ has no winning strategy in the Menger game on
$X$, see \cite{Hur25} or \cite[Theorem~13]{COC1}.

For a relation $R$ on $\w$ and $x,y\in\w^\w$ we denote by
$[x\:R\:y]$ the set $\{n: x(n)\:R\:y(n)\}$.
The next lemma improves \cite[Lemma~4.3]{BelTokZdo16}.

\begin{lemma} \label{meng_forcing}
Suppose that $\F$ is a semifilter  such that $\F\subset\F^+$ and $\F^+$ is Menger.
 Let
$x$ be   $\IP_\F^*$-generic, $\IQ\in V[x]$ be an
$\w^\w$-bounding poset, and $H$  a $\IQ$-generic over $V[x]$.
Then
in $V[x*H]$ there is no  Menger semifilter $\G$ containing $\F$
such that $\G\subset\G^+$ and $\G^+$ is Menger.
\end{lemma}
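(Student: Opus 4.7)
The plan is to argue by contradiction: suppose in $V[x*H]$ there exists a Menger semifilter $\G\supseteq\F$ with $\G\subseteq\G^+$ and $\G^+$ Menger. I will derive the contradiction by combining the Menger characterisation of Theorem~\ref{basics}, the $\w^\w$-bounding of $\IQ$, and the combinatorial structure of the $\IP_\F^*$-generic~$x$.

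First, I extract from $x$ the columns $z_n:=\{k\in\w:x(n,k)=1\}$ for $n\in\w$. A standard density argument---exploiting that $\F\subseteq\F^+$ (so $\F$ is centred) and that $\sim\F$ is closed under unions with finite sets (hence $\mathrm{dom}(p_n)$ can be grown by any singleton while remaining in $\sim\F$)---shows that in $V[x]$ one has $\mathrm{dom}(x_n)=\w$, both $z_n$ and $\w\setminus z_n$ lie in $\F^+$, and for every $F\in\F\cap V$ the sets $F\cap z_n$ and $F\setminus z_n$ are both infinite. This is the splitting property I will exploit.

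Next, fix a sequence $\la F_n:n\in\w\ra\in V$ of elements of $\F$ (its precise choice will be dictated by $\F^+$ being Menger in~$V$) and consider the two-element compact families $\K_n:=\{F_n\cup z_n,\ F_n\cup(\w\setminus z_n)\}$. Since $F_n\in\F\subseteq\G$, both elements of $\K_n$ contain an element of $\G$, so $\K_n\subseteq\G=(\G^+)^+$. Applying Theorem~\ref{basics}(3) to the Menger space $\G^+$ yields an increasing $\la m_n\ra\in V[x*H]$ such that $\bigcup_n(K_n\cap[m_{n-1},m_n))\in\G$ for every selector $\la K_n\ra\in\prod_n\K_n$. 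By the $\w^\w$-bounding of $\IQ$ over $V[x]$ one may assume $\la m_n\ra\in V[x]$; replacing $\la m_n\ra$ by a dominating $V[x]$-sequence only enlarges the intervals $[m_{n-1},m_n)$ and therefore preserves the ``diagonal-in-$\G$'' property.

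The contradiction will be extracted by a density argument in $\IP_\F^*$, and this is the main obstacle. Parametrising selectors by $y\in 2^\w$ via $K^y_n=F_n\cup z_n$ if $y(n)=0$ and $K^y_n=F_n\cup(\w\setminus z_n)$ otherwise, the set $W_y:=\bigcup_n(K^y_n\cap[m_{n-1},m_n))$ lies in $\G$ for every $y\in V[x*H]$. The task is to show that below any condition $p$ in the generic filter deciding enough of $\la m_n\ra$ there exists an extension $p'\leq p$ and a particular $y\in 2^\w\cap V[x]$ for which $p'$ forces $W_y\notin\G^+$, contradicting $W_y\in\G\subseteq\G^+$. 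The key is that below $p$ one has complete freedom to flip $z_n\uhr([m_{n-1},m_n)\setminus\mathrm{dom}(p_n))$ in $\IP_\F^*$; one aligns this flip with a ground-model $\F$-set that witnesses $W_y\notin\G^+$. The hypothesis that $\F^+$ is Menger in $V$ enters twice---through Lemma~\ref{w^w-bound}, which furnishes the $\w^\w$-bounding of $\IP_\F^*$ needed to read the name for $\G$ back into $V[x]$, and through the choice of $\la F_n\ra$ controlling the ``diagonal residue'' $\bigcup_n(F_n\cap[m_{n-1},m_n))$. The delicate point is that $\G$ is only assumed to be a semifilter, so intersection-closure of $\G$ is unavailable, and the alignment of the flip with the selector must be engineered using only the semifilter closure under almost-supersets together with the generic splitting of the $z_n$.
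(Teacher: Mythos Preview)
Your outline has a genuine gap at the decisive step, and it also silently drops one of the two hypotheses on~$\G$.

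The paper's proof uses \emph{two} separate Menger-game plays---one on~$\G$ and one on~$\G^+$. From the play on $\G$ it extracts a function $h$ and, via Equation~\eqref{eq_new_3}, an element of $\G^+$; from the play on $\G^+$ it extracts a sequence $\la l_k\ra$ and, via Equation~\eqref{eq_new_4}, an element of $\G$. The contradiction comes by passing to a single extension $p^1\leq p$ that sets the generic to $0$ on a block of coordinates indexed by the $h_1(l_k)$, and then checking that the two outputs must meet (because $\G\subseteq\G^+$) yet cannot (because of how $p^1$ was rigged). Both Menger hypotheses are used, and the interaction between the two plays is the heart of the argument.

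Your sketch, by contrast, invokes only the Menger property of~$\G^+$ (through Theorem~\ref{basics}) and never uses that $\G$ is Menger. The last paragraph promises a density argument yielding $p'\leq p$ and $y$ with $p'\Vdash W_y\notin\G^+$, but this is precisely where the missing idea lies. To witness $W_y\notin\G^+$ you need some $G\in\G$ disjoint from $W_y$; your only access to $\G$ is via $\F\subseteq\G$, so the witness must be some $F\in\F\cap V$. But every $W_y$ contains the diagonal $\bigcup_n\big(F_n\cap[m_{n-1},m_n)\big)$, so $F$ must avoid this set---yet $\la m_n\ra$ is determined only \emph{after} the $F_n$ are fixed, and you never say how ``$\F^+$ Menger in $V$'' resolves this circularity. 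Even granting that, on $\mathrm{dom}(p_n)\cap[m_{n-1},m_n)$ the values of $z_n$ are frozen by $p$, so your ``complete freedom to flip'' does not reach those coordinates; aligning $F$ with the frozen part is a further constraint you do not address. Without a second Menger-type input (which is exactly what the paper's play on $\G$ supplies), the density step as described does not close.

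A minor point: replacing $\la m_n\ra$ by a dominating $V[x]$-sequence does \emph{not} ``enlarge the intervals $[m_{n-1},m_n)$''---both endpoints move, and the new intervals need not contain the old ones. If you use item~(2) of Theorem~\ref{basics} instead of item~(3), domination does preserve the conclusion (since $K_n\cap m_n$ grows monotonically in $m_n$ and $\G$ is upward closed); with item~(3) you would have to pass to a subsequence more carefully.
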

\begin{proof}
Suppose that a semifilter $\G\in V[x*H]$ as above exists. Throughout
the proof we shall identify $x$ with $\bigcup x:\w\times\w\to 2$.
Suppose to the contrary, that such a $\G$ exists. Set
$x_j(n)=x(j,n)$. Since $\G\subset\G^+$, $\G$ cannot contain two
disjoint elements, and hence for every $j$ there exists $\ve_j\in 2
$ such that $X_j:=x_j^{-1}(\ve_j)\in\G^+$. Without loss of
generality we may assume that there exists an infinite $T\subset\w$
such that $\ve_j=1$ for all $j\in T$. Since $\IP_\F^* *\IQ$ is
$\w^\w$-bounding we can get an increasing sequence  $\la j_k:k\in\w
\ra\in V$  such that for every $k\in\w$ there exists $t_k\in
[j_k,j_{k+1})\cap T$.

Given $k\in\w$,  for every   $s\in\w^{j_{k+1}}$ set
$$\U_{k,s}=\{X\subset\w\: :\: s[j_{k+1}]\subset X \}.$$
Then $\{\U_{k,s}:s\in\Sigma_{k,l}\}$, where
$\Sigma_{k,l}=\prod_{j<j_{k+1}}(X_j\setminus l)$,
 is an open cover of
$\G$ for every $k,l\in\w$, because each  $Y\in \G$ has infinite
intersection with each $X_j$. We shall define
 a strategy $\Theta$ for  player $I$ in the Menger game on $\G$ as follows.
 $\Theta(\emptyset)=\{\U_{0,s}:s\in \Sigma_{0,0}\}$.
 If $\mathit{II}$ replied with some
 finite subset $\mathcal V_0$ of  $\Theta(\emptyset)$, $I$ finds $h(0)\in\w$ such that
 this reply is contained in $\{\U_{0,s}:s\in h(0)^{j_1}\cap \Sigma_{0,0}\}$ and
 his next move is then
$\Theta\la\mathcal V_0\ra:= \{\U_{1,s}:s\in \Sigma_{1, h(0)}\}.$
And so on, i.e., after $k$ many rounds
player $I$ finds $h(k)\in\w$ above $h(k-1)$ (here $h(-1):=0$) such that the reply of $\mathit{II}$
is contained in $\{\U_{k,s}:s\in h(k)^{j_{k+1}}\cap \Sigma_{k,h(k-1)}\}$ and
 his next move is then
$$\Theta\la\mathcal V_0,\ldots,\mathcal V_k\ra:= \{\U_{k+1,s}:s\in \Sigma_{k+1,h(k)}\}.$$
Since $\G$ is Menger,
 $\Theta$ is not winning, and hence there exists
an increasing sequence $h\in\w^\w\in V[x*H]$
such that
\begin{equation} \label{eq_new_1}
 \G\subset\bigcup\big\{\bigcup\{\U_{k,s}: s\in h(k)^{j_{k+1}}\cap \Sigma_{k,h(k-1)} \} \: :\: k\in\w           \big\}.
 \end{equation}

 Next, we shall define
 a strategy $\Upsilon$ for  player $I$ in the Menger game on $\G^+$ as follows.
 $\Upsilon(\emptyset)=\{\U_{0,s}:s\in S_{0,0}\}$, where
 $S_{k,l}=\prod_{j<j_{k+1}}\big(\w\setminus(\mathrm{dom}(p_j)\cup l)\big)$ for all $k,l\in\w$.
 If $\mathit{II}$ replied with some
 finite subset $\mathcal V_0$ of  $\Upsilon(\emptyset)$, $I$ finds $l_0\in\w$ such that
 this reply is contained in $\{\U_{0,s}:s\in h(l_0)^{j_1}\}$ and
 his next move is then
$\Upsilon(\mathcal V_0):= \{\U_{1,s}:s\in S_{1,h(l_0)}\}.$
And so on, i.e., after $k$ many rounds
player $I$ finds $l_k\in\w$ above $l_{k-1}$ such that the reply of $\mathit{II}$
is contained in $\{\U_{k,s}:s\in h_1(l_k)^{j_{k+1}}\}$ and
 his next move is then
$$\Upsilon\la\mathcal V_0,\ldots,\mathcal V_k\ra:= \{\U_{k+1,s}:s\in S_{k+1,h(l_k)}\}.$$
Since $\Upsilon$ is not winning, there exists
an increasing sequence $\la l_k:k\in\w\ra$ (we set also $l_{-1}=0$ for convenience)
such that
\begin{equation} \label{eq_new_2}
 \G^+\subset\bigcup_{k\in\w}\bigcup\big\{\U_{k,s}: s\in h(l_k)^{j_{k+1}}\cap S_{k, h(l_{k-1})}\big\}.
 \end{equation}
Let $h_1\in V\cap \w^\w$ be an increasing function such that $h_1(k)\geq h(k)$ for all $k\in\w$,
and
 $\la p,\dot{q}\ra\in x*H$  a condition forcing all the above to happen.
 Let also $\name{h},$ $\name{t}_k,$ $\name{\G},$ $\name{\Sigma}_{k,l},$ $\name{x}$, $\name{x}_j$,
 $\name{X}_j$, $\name{T}$, ... be
 $\IP_{\F}^* *\IQ$-names of the objects in $V[x*H]$ considered above.
Consider the condition $p^1\in\IP_{\F}^*$ below $p$ defined as follows:
$$ p^1_j=p_j\cup\{\la n,0\ra: n\in h_1(l_k)\setminus \mathrm{dom}(p_j)\}$$
whenever $j\in [j_k,j_{k+1})$ and $k\in\w$. Thus $\la
p^1,\name{q}\ra$ forces Equations~\ref{eq_new_1} and \ref{eq_new_2}.
Thus $\la p^1,\name{q}\ra$ forces the following: If for every
$k\in\w$ and $s\in \name{h}(k)^{j_{k+1}}\cap
\Sigma_{k,\name{h}(k-1)}$ we pick $j_s\in j_{k+1}$, then
\begin{equation} \label{eq_new_3}
\big\{ s(j_s)\: :\: s\in \name{h}(k)^{j_{k+1}}\cap \Sigma_{k,\name{h}(k-1)}, k\in\w\big\}\in\name{\G}^+; \mbox{\ and}
\end{equation}
if for every $k\in\w$ and $s\in  \name{h}(\name{l}_k)^{j_{k+1}}\cap \name{S}_{k, \name{h}(\name{l}_{k-1})}$ we pick
$j'_s\in j_{k+1}$, then
\begin{equation} \label{eq_new_4}
\big\{ s(j'_s)\: :\: s\in \name{h}(\name{l}_k)^{j_{k+1}}\cap \name{S}_{k, \name{h}(\name{l}_{k-1})}, k\in\w\big\}\in\name{\G}.
\end{equation}
Given a $\IP_\F^* *\IQ$-generic filter $ x^1*H^1$  containing $\la
p^1,\name{q}\ra$, we shall work in $V[x^1*H^1]$ in what follows. For
abuse of notation we shall again use notations for the evaluations
with respect to $x^1*H^1$ of all names considered above, obtained
simply by removing ``$\name{\ }$'', i.e., $h:=\name{h}^{x^1*H^1}$
etc. This way letting $j'_{s}:=t_k$ for each   $s\in
h(l_k)^{j_{k+1}}\cap S_{k, h(l_{k-1})}$, the set $B'$
 defined in
Equation~\ref{eq_new_4} belongs to $\G$. For every $k\in\w$ and
$s\in h(k)^{j_{k+1}}\cap \Sigma_{k,h(k-1)}$ set $j_s:=t_{m(k)}$,
where $m(k)=\min\{m:l_m\geq k\}$. The set $B$
 defined in
Equation~\ref{eq_new_3} for this choice of $j_s$'s belongs to
$\G^+$, and hence $B\cap B'\neq\emptyset$. Thus there exist
$k,k'\in\w$, $s'\in h(l_{k'})^{j_{k'+1}}\cap S_{k', h(l_{k'-1})}$,
and $s\in h(k)^{j_{k+1}}\cap \Sigma_{k,h(k-1)}$ such that
\begin{equation}\label{eq_new_5}
s(t_{m(k)})=s'(t_{k'}).
\end{equation}
Since $s(t_{m(k)})\in h(k)\setminus h(k-1)$ and $s'(t_{k'})\in
h(l_{k'})\setminus h(l_{k'-1})$, from Equation~\ref{eq_new_5} we
conclude that $h(k)\setminus h(k-1)\subset h(l_{k'})\setminus
h(l_{k'-1})$, and hence $k'=\min\{m:l_m\geq k\}$, which yields
$k'=m(k)$. Recall that $s(t_{m(k)})\in X_{t_{m(k)}}$, and therefore
\begin{equation}\label{eq_new_6}
x_{t_{m(k)}}\big(s(t_{m(k)})\big)=\varepsilon_{t_{m(k)}}=1.
\end{equation}
On the other hand,
\begin{equation}\label{eq_new_7}
x_{t_{m(k)}}\big(s(t_{m(k)})\big)=
x_{t_{m(k)}}\big(s'(t_{m(k)})\big)=
p^1_{t_{m(k)}}\big(s'(t_{m(k)})\big)
\end{equation}
 because $p^1_{t_{m(k)}}\subset
x_{t_{m(k)}}$ and
$$s'(t_{m(k)})\in h(l_{m(k)})\subset
h_1(l_{m(k)})\subset\mathrm{dom}(p^1_{t_{m(k)}}),$$ the last
inclusion following from $t_{m(k)}\in [j_{m(k)}, j_{m(k)+1})$ and
the definition of $p^1$. Recall that
$s'(t_{m(k)})\not\in\mathrm{dom}(p_{t_{m(k)}})$ by the definition of
$S_{k',h(l_{k'-1})}=S_{m(k),h(l_{m(k)-1})}$, which implies that
$p^1_{t_{m(k)}}\big(s'(t_{m(k)})\big)=0$ by the definition of $p^1$.
Thus
 $x_{t_{m(k)}}\big(s(t_{m(k)})\big)=
x_{t_{m(k)}}\big(s'(t_{m(k)})\big)=0$ by Equation~\ref{eq_new_7},
which is impossible by Equation~\ref{eq_new_6}. This contradiction
completes our proof.
 \end{proof}
The next theorem improves \cite[Theorem~4.5]{BelTokZdo16} and is
the main result of this section.
\begin{theorem} \label{0_anti_main}
It is consistent that there is no semifilter $\G$ on $\w$ such that
$\G\subset\G^+$ and both $\G,\G^+$ are Menger.
\end{theorem}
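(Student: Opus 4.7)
The plan is to obtain the desired model by a countable support iteration of length $\w_2$ over a model of GCH, where each iterand is either trivial or of the form $\IP_\F^*$ for a semifilter selected by a bookkeeping function. Specifically, I would build $\langle \IP_\alpha, \dot{\IQ}_\alpha : \alpha \leq \w_2 \rangle$ and a $\diamondsuit$-style bookkeeping presenting every $\IP_{\w_2}$-nice name for a subset of $2^\w$ cofinally often as a $\IP_\alpha$-name $\dot{\F}_\alpha$. At stage $\alpha$, if it is forced that $\dot{\F}_\alpha$ is a semifilter with $\dot{\F}_\alpha \subset \dot{\F}_\alpha^+$ and $\dot{\F}_\alpha^+$ Menger, set $\dot{\IQ}_\alpha := \IP_{\dot{\F}_\alpha}^*$; otherwise let $\dot{\IQ}_\alpha$ be trivial. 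By Lemma~\ref{w^w-bound} every nontrivial iterand is forced to be proper and $\w^\w$-bounding, and by the standard preservation theorems for countable support iterations of proper $\w^\w$-bounding forcings, the full iteration $\IP_{\w_2}$ is proper and $\w^\w$-bounding, has the $\w_2$-chain condition, and yields $\cc=\w_2$ in $V^{\IP_{\w_2}}$.

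Suppose towards a contradiction that in $V^{\IP_{\w_2}}$ there is a semifilter $\G$ with $\G \subset \G^+$ and both $\G$, $\G^+$ Menger. The key step is to locate a stage $\alpha < \w_2$ and a semifilter $\F \in V^{\IP_\alpha}$ satisfying $\F \subset \G$, $\F^+$ Menger in $V^{\IP_\alpha}$, and $\dot{\IQ}_\alpha = \IP_\F^*$ (note that $\F \subset \G \subset \G^+ \subset \F^+$ gives $\F \subset \F^+$ for free). The tail iteration $\IP_{\w_2}/\IP_{\alpha+1}$ is proper and $\w^\w$-bounding in $V^{\IP_{\alpha+1}}$; therefore Lemma~\ref{meng_forcing}, applied with this $\F$ and with $\IQ := \IP_{\w_2}/\IP_{\alpha+1}$, yields that in $V^{\IP_{\w_2}}$ no Menger semifilter extending $\F$ has a Menger coideal. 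But $\G$ is such a semifilter, a contradiction.

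The main obstacle is the reflection step producing $\alpha$ and $\F$ as above: the coideal $\F^+$ computed in $V^{\IP_\alpha}$ is in principle larger than $\G^+$ and so its Mengerness is not inherited automatically. I expect to resolve this by selecting $\alpha$ from a club of ordinals below $\w_2$ where $V^{\IP_\alpha}$ reflects the pertinent $\Sigma^1_2$-type facts about $\G$ and $\G^+$ holding in $V^{\IP_{\w_2}}$. Using Hurewicz's characterization of Menger spaces (as those whose continuous images in $\w^\w$ are non-dominating) together with the $\w^\w$-bounding character of the tail iteration, one traces the Menger witnesses for $\G^+$ back to an appropriate candidate $\F^+ \in V^{\IP_\alpha}$ whose Mengerness is checked inside $V^{\IP_\alpha}$; the bookkeeping, arranged to enumerate all pairs $(\alpha, \dot{\F})$ with $\dot{\F}$ a $\IP_\alpha$-name, then guarantees that $\dot{\IQ}_\alpha = \IP_\F^*$ at some such $\alpha$. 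This interplay between Mengerness, the coideal structure, and the $\w^\w$-bounding property of the iteration is the technical heart of the argument.
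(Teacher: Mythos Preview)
Your proposal is correct and follows essentially the same route as the paper: a countable support iteration of length $\w_2$ over a GCH ground model with $\diamondsuit$-style bookkeeping, iterands of the form $\IP_\F^*$, properness and $\w^\w$-bounding via Lemma~\ref{w^w-bound}, and the kill via Lemma~\ref{meng_forcing} applied with the tail of the iteration. The paper takes $\F_\alpha=\G\cap V[G\cap\IP_\alpha]$ and obtains the needed reflection (that $\F_\alpha,\F_\alpha^+$ are Menger in $V[G\cap\IP_\alpha]$ on a stationary set) by invoking the diamond machinery of \cite[Theorem~5.13]{She_propimp} as a black box, rather than via the Hurewicz characterization you sketch; either way, you have correctly isolated this reflection step as the one nontrivial point left to verify.
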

\begin{proof}
Let us assume that $V=L$  and\footnote{It suffices to assume
$2^\w=\w_1$, $2^{\w_1}=\w_2$, and
$\diamond_{\{\delta\in\w_2:\mathrm{cf}(\delta)=\w_1\}}$, see
\cite[Theorem~5.13]{She_propimp}.} consider a function
$B:\w_2\to H(\w_2)$, the family of all sets whose transitive closure
has size $<\w_2$. Let $\la
\IP_\alpha,\name{\IQ}_\beta:\beta<\alpha\leq\w_2\ra$ be the
following  iteration with at most countable supports: If $B(\alpha)$
is a $\IP_\alpha$-name for $\IP^*_{\name{\F}}$ for some semifilter
$\name{\F}$ such that $\forces_{\IP_\alpha}$
``$\name{\F}\subset\name{\F}^+$ and $\name{\F},\name{\F}^+$ are
Menger'', then $\name{\IQ}_\alpha=B(\alpha)$. Otherwise we let
$\name{\IQ}_\alpha$  be a $\IP_\alpha$-name for the trivial forcing.
Then $\IP_{\w_2}$ is $\w^\w$-bounding forcing notion with
$\w_2$-c.c. being a countable support iteration of length $\w_2$ of
proper $\w^\w$-bounding posets of size $\w_1$  over a model of GCH.
Now, using the suitable diamond in $V$ for the choice of $B$
together with a standard reflection argument, we can guarantee in
addition that for any $\IP_{\w_2}$-generic filter $G$ over $V$ and
semifilter $\F\in V[G]$ such that $\F\subset\F^+$ and $\F,\F^+ $ are
Menger, the following holds:
\begin{quote}
The set $\big\{\alpha:\F_\alpha:=(\F\cap V[G\cap\IP_\alpha])\in
V[G\cap\IP_\alpha], $ $\F_\alpha\subset\F^+_\alpha$,
$\F_\alpha,\F^+_\alpha$ are Menger in $V[G\cap\IP_\alpha],$ and
$\name{\IQ}_\alpha^{G\cap\IP_\alpha}=\IP^*_{\name{\F}_\alpha}\big\}$
is stationary in $\w_2$.
\end{quote}
 Now, a direct application of
Lemma~\ref{meng_forcing} implies that $\F_\alpha$ cannot be enlarged
to any semifilter $\U\subset\U^+$ in $V[G]$ such that $\U,\U^+$ are
Menger,  which contradicts the fact that  $\F$ is such an
enlargement.
\end{proof}

\section{Open Questions} \label{problem}

Theorem~\ref{0main} together with Corollaries~\ref{cor01} and \ref{cor02} motivate the following

\begin{question}\,
\begin{itemize}
\item Is there a ZFC example of a non-meager filter $\F$ such that $\F^+$ is not homeomorphic to
$(\F^+)^2$?
\item Is $\F\times\F^+$ homeomorphic to $\F^+$ for every non-meager filter $\F$?
\end{itemize}
\end{question}

In light of Theorem~\ref{0main_density_z} it is natural to ask the next question.
Let us recall from \cite{ChoRepZdo15} that for a filter $\F$ the Mathias forcing associated to it
adds no dominating reals iff $\F$ is Menger, so the following question is
especially interesting when $\F$ is not an ultrafilter.

\begin{question}
Let $\F$ be a filter such that $\F^+$ is Menger. Is there a (proper) poset
adding no dominating reals and adding an infinite pseudointersection of $\F$?
What about the Mathias forcing associated to $\F^+$?
\end{question}

Theorem~\ref{0_anti_main} leaves open one possibility whose inconsistency we are unable to establish.

\begin{question}\label{main_q}\,
\begin{itemize}
\item Is there a ZFC example of a filter $\F$ such that $\F^+$ is Menger?
\item Is there a ZFC example of a semifilter $\F\subset\F^+$ such that $\F^+$ is Menger?
\end{itemize}
\end{question}

The first item of the question above  has been mentioned to us by Miko\l aj Krupski in private communication and
is related to his work \cite{Kru??}.
By \cite[Lemma~3.1]{BelTokZdo16} the negative answer to the second item of Question~\ref{main_q} implies that consistently
every  Menger remainder of a topological group is $\sigma$-compact.

\end{document}